\newcommand{\bc}{\mathbf C}
\newcommand{\br}{\mathbf R}
\newcommand{\Cal}{\mathcal}
\newcommand{\coker}{\operatorname{Coker}}
\renewcommand{\dim}{\operatorname{Dim}}
\newcommand{\id}{\operatorname{Id}}
\newcommand{\im}{\operatorname{Im}}
\renewcommand{\ker}{\operatorname{Ker}}
\newcommand{\mn}[1]{\Vert#1\Vert}
\newcommand{\ol}{\overline}
\newcommand{\ran}{\operatorname{Ran}}
\newcommand{\re}{\operatorname{Re}}
\newcommand{\restr}[1]{\big|_{#1}}
\newcommand{\set}[1]{\left\{\,#1\,\right\}}
\newcommand{\supp}{\operatorname{\rm supp}}
\newcommand{\w}[1]{\langle #1\rangle }
\newcommand{\wt}{\widetilde}
\newcommand{\rb}{{\rm b}}
\numberwithin{equation}{section}
\gdef\theoremheaderfont#1{\gdef\theorem@headerfont{#1}}
\def\skipmath@b#1\(#2\){{#1}
  \ifx\skipmath@b#2\else\(#2\)\expandafter\skipmath@b\fi}
\newtheorem{thm}{Theorem}[section]
\newtheorem{lem}[thm]{Lemma}
\newtheorem{cor}[thm]{Corollary}
\newtheorem{prop}[thm]{Proposition}
\newtheorem{rem}[thm]{Remark}
\theoremstyle{definition}
\newtheorem{defn}[thm]{Definition}
\newtheorem{exe}[thm]{Example}
\theoremstyle{remark}
\begin{document}

\title[Solvability and Subellipticity]
{The Solvability and Subellipticity of Systems\\of Pseudodifferential Operators}
\author[NILS DENCKER]{{\textsc Nils Dencker}}
\dedicatory{Dedicated to Ferruccio Colombini on his sixtieth birthday}
\address{Centre for Mathematical Sciences, Lund University, Box 118,
SE-221 00 Lund, Sweden}
\email{dencker@maths.lth.se}
\date{December 1, 2008}
\subjclass[2000]{35S05 (primary) 35A07, 35H20, 47G30, 58J40 (secondary)}
\keywords{solvability, subelliptic, pseudodifferential operator, principal type, systems}



\maketitle

\baselineskip 18pt 
\lineskip 2pt
\lineskiplimit 2pt

\section{Introduction}

In this paper we shall study the question of solvability and
subellipticity of square systems of classical pseudodifferential
operators of principal type on a $C^\infty$ manifold~$X$. These are
the pseudodifferential operators which have an 
asymptotic expansion in homogeneous terms, where the highest order term,
the principal symbol, vanishes of first order on the kernel.
Local solvability for an $N \times N$ system of pseudodifferential
operators~$P$ at a compact set $K \subseteq X$ means that
the equations
\begin{equation}\label{locsolv}
Pu = v 
\end{equation}
have a local weak solution $u \in \Cal D'(X, \bc^N)$ in a neighborhood of $K$
for all $v\in C^\infty(X, \bc^N)$ in a subset of finite codimension.  
We can also define microlocal solvability at any compactly based cone
$K \subset T^*X$, see ~\cite[Definition~26.4.3]{ho:yellow}. 
Hans Lewy's famous counterexample~\cite{lewy} from 1957 showed that not all
smooth linear partial differential operators are solvable.

In the scalar case, Nirenberg and Treves conjectured in ~\cite{nt} that local solvability
of scalar classical pseudodifferential operators of principal type is
equivalent to condition~(${\Psi}$) on the 
principal symbol ~$p$. Condition~(${\Psi}$) means that
\begin{multline}\label{psicond} \text{$\im (ap)$
    does not change sign from $-$ to $+$}\\ 
 \text{along the oriented
    bicharacteristics of $\re (ap)$}
\end{multline}
for any $0 \ne a \in C^\infty(T^*X)$. These oriented bicharacteristics are
the positive flow-outs of the Hamilton vector field 
$$H_{\re (ap)} = \sum_j \partial_{{\xi}_j}\re (ap) \partial_{x_j} -
\partial_{x_j} \re(ap) \partial_{{\xi}_j} $$ 
on $\re (ap) =0$, and are called semibicharacteristics of ~$p$. 
The Nirenberg-Treves conjecture was recently proved
by the author, see~ \cite{de:NT}. 

Condition~\eqref{psicond} is obviously invariant under symplectic changes of
coordinates and multiplication with non-vanishing factors.
Thus the condition is invariant under conjugation of ~$P$ with elliptic
Fourier integral operators. 
We say that $p$ satisfies condition~ ($\ol {\Psi}$) if $\ol p$
satisfies condition~ (${\Psi}$), which means that only sign changes
from~$-$ to~$+$ is allowed in~\eqref{psicond}. We also say that $p$
satisfies condition ($P$) if 
there are no sign changes on the semibicharacteristics, that is, $p$ satisfies
both condition ~(${\Psi}$) and~ ($\ol {\Psi}$). For partial
differential operators condition  ~(${\Psi}$) and~ ($P$) are
equivalent, since the principal symbol is either odd or even in~${\xi}$.

For systems there is no corresponding conjecture for solvability.
We shall consider systems of principal type, so that the principal
symbol vanishes of first order on the kernel, see Definition~~\ref{princtype}.  
By looking at diagonal operators, one finds that condition ~(${\Psi}$) for
the eigenvalues of the principal symbol is necessary for solvability.
A special case is when we have constant characteristics, so that the
eigenvalue close to the origin has constant 
 multiplicity, see Definition~\ref{constchar}. Then, the eigenvalue is a
$C^\infty$ function and condition~ (${\Psi}$) is well-defined.
For classical systems of pseudodifferential operators
of principal type having eigenvalues of the principal symbol with 
constant multiplicity, the generalization of the Nirenberg-Treves
conjecture is that local solvability is equivalent to condition~
(${\Psi}$) on the eigenvalues. This has recently been proved by the
author, see Theorem~2.7 in~\cite{de:sysolv}. 

But when the principal symbol is not diagonalizable, condition
~(${\Psi}$) is not sufficient for
local solvability, see Example~\ref{saex} below.
In fact, it is not even known if condition~(${\Psi}$) is sufficient
in the case when the principal system is $C^\infty$
diagonalizable. Instead,  we shall study the
{\em quasi-symmetrizable} systems introduced in ~\cite{de:pseudospec}, see Definition~\ref{QS}.
These are of principal type, are invariant under taking adjoints and
multiplication with invertible systems. A scalar quasi-symmetrizable symbol is 
of principal type and satisfies condition~($P$).
Our main result is that quasi-symmetrizable systems are locally solvable, see Theorem~\ref{QSthm}.

We shall also study the subellipticity of square systems. An $N
\times N$ system of
pseudodifferential operators~$P \in
{\Psi}^m_{cl}(X)$ is {\em subelliptic} with a loss of $ 
{\gamma} < 1$ derivatives if 
 $ 
 Pu \in H_{(s)}$ implies that $ u \in H_{(s+m-{\gamma})}
 $ 
locally for $ u \in \Cal D'(X, \bc^N)$. Here $H_{(s)}$ are the standard
$L^2$ Sobolev spaces, thus ellipticity corresponds to
${\gamma} = 0$ so we may assume ${\gamma} > 0$. 
For scalar operators, subellipticity is equivalent to condition~($\ol {\Psi}$) and the bracket
condition on the principal symbol~$p$, i.e., that some repeated Poisson
bracket of $\re p$ and $\im p$ is non-vanishing. This is not true for
systems, and there seems to be no general
results on the subellipticity for systems of pseudodifferential
operators. In fact, the real and imaginary parts
do not commute in general, making the bracket condition meaningless.
Even when they do, the bracket condition is not invariant and 
not sufficient for subellipticity, see Example~\ref{ex1}.

Instead we shall study quasi-symmetrizable symbols, for which we introduce
invariant conditions on the order of vanishing of the symbol  
along the semibicharacteristics of the eigenvalues.
Observe that for systems, there could be several (limit) semibicharacteristics  of the
eigenvalues going through a characteristic point, see Example~\ref{subex}. 
Therefore we introduce the {\em approximation property}
in Definition~\ref{apprdef} which gives that the all (limit)
semibicharacteristics of the eigenvalues are parallell at the characteristics,
see Remark~\ref{condrem}. 
We shall study systems of {\em finite type} introduced in
~\cite{de:pseudospec}, these are 
quasi-symmetrizable systems satisfying the approximation property, for which 
the imaginary part on the kernel vanishes of finite order along the
bicharacteristics of the real part of the eigenvalues. This definition
is invariant under multiplication with invertible systems and taking adjoints. For scalar
symbols this corresponds to the case when the operator satisfies
condition~($P$) and the bracket condition. For system of finite type we obtain
subellipticity with a loss of $2k/2k+1$ derivatives as in the scalar
case, where $2k$ is the order of vanishing, see Theorem~\ref{subthm}.
For the proof, we shall use the estimates developed in ~\cite{de:pseudospec}.
The results in this paper are formulated for operators acting on the
trivial bundle. But since our results are mainly local, they can be applied to
operators on sections of fiber bundles.

\section{Solvability of Systems}

Recall that a scalar symbol $p(x,{\xi})\in C^\infty(T^*X)$ is of {\em principal
  type} if $dp \ne 0$ when $p = 0$. 
We shall generalize this definition to  systems $P\in C^\infty( T^*X)$. 
For ${\nu} \in T_w(T^*X)$, $w = (x,{\xi})$, we let
$\partial_{\nu}P(w) = \w{{\nu}, dP(w)}$. 
We shall denote $\ker P$ the kernel and $\ran P$ the range of the matrix~$P$.

\begin{defn} \label{princtype}
The $N \times N$ system $P(w) \in C^\infty(T^*X)$ is of {\em
  principal type } at $w_0$ if
\begin{equation}\label{pr_type}
\ker P(w_0) \ni u \mapsto \partial_{\nu}P(w_0)u \in \coker P(w_0)  = \bc^N/\ran P(w_0)
\end{equation} 
is bijective for some ${\nu} \in T_{w_0}(T^*X)$. The operator $P \in {\Psi}_{cl}^m(X)$
is of principal type if the homogeneous principal symbol
${\sigma}(P)$ is of principal type. 
\end{defn}

Observe that if $P$ is homogeneous in~${\xi}$, then the direction~${\nu}$
cannot be radial. In fact, if ~${\nu}$ has the radial direction
and~$P$ is homogeneous then 
$ \partial_{\nu}P = cP$
which vanishes on $\ker P$.

\begin{rem}\label{princrem}
If $P(w) \in C^\infty$ is of principal type and $A(w)$, $B(w) \in
C^\infty$ are invertible then  $APB$ is of principal
type. We have that $P$ is of principal type if and only if the adjoint $P^*$
is of principal type. 
\end{rem}

In fact, by Leibniz' rule we have
\begin{equation} \label{dapb}
 \partial (APB) = (\partial A)PB + A(\partial P)B+  AP \partial B
\end{equation}
and $\ran (APB) = A (\ran P)$ and $\ker (APB) =
B^{-1}(\ker P)$ when $A$ and $B$ are invertible, which gives
invariance under left and right multiplication.  
Since $ \ker P^*(w_0) = \ran
P(w_0)^\bot$ we find that $P$ satisfies ~\eqref{pr_type} if and 
only if  
\begin{equation}\label{bilform}
 \ker P(w_0) \times \ker P^*(w_0) \ni (u,v) \mapsto
\w{\partial_{\nu}P(w_0) u,v}
\end{equation}
is a non-degenerate bilinear form. 
Since $\w{\partial_{\nu}P^* v,u} = \ol{\w{\partial_{\nu}P u, v}}$ 
we then obtain that $P^*$ is of principal type.

Observe that if $P$ only has one vanishing eigenvalue ${\lambda}$ (with
multiplicity one) then the condition that $P$ ~is of
principal type reduces to the condition in the scalar case:
$d{\lambda} \ne 0$ when ${\lambda}= 0$.
In fact, by using the
spectral projection one can find invertible systems~ $A$ and~
$B$ so that
\begin{equation*}
 APB = 
\begin{pmatrix}
{\lambda} & 0\\ 0 & E 
\end{pmatrix} \in C^\infty
\end{equation*}
where $E$ is an invertible $(N-1) \times (N-1)$ system. Since this system is
of principal type we obtain the result by the invariance.

\begin{exe}
Consider the system
\begin{equation*}
 P(w) = 
\begin{pmatrix}
{\lambda}_1(w) & 1 \\ 0 & {\lambda}_2(w) 
\end{pmatrix}
\end{equation*}
where ${\lambda}_j(w) \in C^\infty$, $j=1$, 2. Then $
P(w)$ is not of principal type 
when ${\lambda}_1(w) = {\lambda}_2(w) = 0$ since then 
$\ker P(w) =
\ran P(w) = \bc \times\set{0}$, which is preserved by $\partial P$.
\end{exe}

Observe that the property of being of principal type is not stable
under $C^1$ perturbation,
not even when $P = P^*$ is symmetric by the following example.

\begin{exe}\label{prtrem} 
The system
\begin{equation*}
 P(w) = 
\begin{pmatrix}
w_1 - w_2 & w_2 \\ w_2 & -w_1 -w_2  
\end{pmatrix} = P^*(w) \qquad w = (w_1,w_2)
\end{equation*}
is of principal type when $w_1 = w_2 = 0$, but {\em not} of principal
type when $w_2 \ne 0$ and $w_1 = 0$. In fact, 
\begin{equation*}
 \partial_{w_1}P =
\begin{pmatrix} 1 & 0 \\ 0 & -1 
\end{pmatrix}
\end{equation*}
is invertible, and when $w_2 \ne 0$ we have that 
$$\ker P(0,w_2)= \ker  \partial_{w_2}P(0,w_2) =\set{z(1,1):
  z \in \bc}$$ 
which is mapped to\/ $\ran P(0,w_2)  =\set{z(1,-1):  z \in \bc}$ by
$\partial_{w_1}P$.
The eigenvalues of ~$P(w)$ are
$ -w_2 \pm \sqrt{w_1^2 + w_2^2}$ which are equal if and only if $w_1 =
w_2 = 0$. 
When $w_2 \ne 0$ the eigenvalue close to zero is $w_1^2/2w_2 + \Cal
O(w_1^4)$ which has vanishing differential at~$w_1 = 0$. 
\end{exe}

Recall that the multiplicity of  ${\lambda}$ as a root of the characteristic equation $|P(w) -
{\lambda}\id_N| = 0$ is the {\em algebraic}  multiplicity of the eigenvalue,
and the dimension of $\ker (P(w) - {\lambda}\id_N)$ is the {\em
geometric} multiplicity. 
Observe the geometric multiplicity is lower or equal to the
algebraic, and for symmetric systems they are equal.

\begin{rem} \label{smoothev}
If the eigenvalue ${\lambda}(w)$ has
constant {\em algebraic} multiplicity then it is a $C^\infty$
function.  
\end{rem}

In fact, if $k$ is the multiplicity then ${\lambda} =
{\lambda}(w)$ solves $\partial_{\lambda}^{k-1}|P(w)
- {\lambda}\id_N| = 0$ so we obtain this from
the Implicit Function Theorem. 
This is { not} true when we have constant geometric multiplicity, for
example $P(t) = 
\begin{pmatrix} 
0 & 1 \\ t & 0 
\end{pmatrix}$, $t \in \br$, has geometric multiplicity equal to one for the
eigenvalues $\pm\sqrt{t}$.

Observe that if the matrix $P(w)$ depend continuously on a parameter
$w$, then the eigenvalues ${\lambda}(w)$ also depend continuously on ~$w$. 
Such a continuous function~${\lambda}(w)$ of eigenvalues we
will call a {\em section of eigenvalues of\/}~$P(w)$.

\begin{defn} \label{constchar}
The $N \times N$ system $P(w) \in C^\infty$ has {\em
constant characteristics} near $w_0$ if there exists an ${\varepsilon}
> 0$ such that any section of eigenvalues~${\lambda}(w)$ of~$P(w)$
with $|{\lambda}(w)| < {\varepsilon}$ has both constant algebraic and constant
geometric multiplicity in a neighborhood of ~$w_0$.
\end{defn}

If $P$ has constant characteristics then the section of  eigenvalues close
to zero has constant algebraic multiplicity, thus it is a
$C^\infty$ function close to zero.
We obtain from Proposition~2.10 in ~\cite{de:sysolv} that
if $P(w) \in C^\infty$ is an $N \times N$ system of constant
characteristics near $w_0$, then $P(w)$ is of principal type at~ $w_0$
if and only 
if the algebraic and geometric multiplicities of $P$ agree at~$w_0$ and
$d{\lambda}(w_0) \ne 0$ for the $C^\infty$ section of
eigenvalues~${\lambda}(w)$ for ~$P$ satisfying 
${\lambda}(w_0) = 0$, thus there are
no non-trivial Jordan boxes in the normal form.

For classical systems of pseudodifferential operators of principal type and
constant characteristics, the  eigenvalues are homogeneous $C^\infty$ functions
when the values are close to zero, so the condition~(${\Psi}$) given
by ~\eqref{psicond} is well-defined on the eigenvalues. 
Then, the natural generalization of the Nirenberg-Treves
conjecture is that local solvability is equivalent to condition~
(${\Psi}$) on the eigenvalues. This has recently been proved by the
author, see Theorem~2.7 in~\cite{de:sysolv}.

When the multiplicity of the eigenvalues of the principal symbol is
not constant the situation is much  more complicated.
The following example shows that then it is not
sufficient to have conditions only on the eigenvalues
in order to obtain solvability, not even 
in the principal type case.

\begin{exe} \label{saex}
Let $x \in \br^2$, $D_x = \frac{1}{i}\partial_x$ and
$$
P(x,D_x) =
\begin{pmatrix}
D_{x_1}  & x_1D_{x_2} \\ x_1D_{x_2} & - D_{x_1}
\end{pmatrix} = P^*(x,D_x)
$$ 
This system is symmetric of principal type and  ${\sigma}(P)$ has real
eigenvalues $\pm \sqrt{{\xi}_1^2 +
  x_1^2{\xi}_2^2}$ but
$$\frac{1}{2}
\begin{pmatrix}
1 & -i \\ 1 & i 
\end{pmatrix}P
\begin{pmatrix}
1 & 1 \\ -i & i 
\end{pmatrix}  
=
\begin{pmatrix}
D_{x_1} - ix_1D_{x_2} &  0 \\ 0 & D_{x_1} +ix_1D_{x_2}
\end{pmatrix}
$$
which is not solvable at $(0,0)$ because condition (${\Psi}$) is not satisfied.
The eigenvalues of the principal symbol are now ${\xi}_1 \pm
ix_1{\xi}_2$.
\end{exe}

Of course, the problem is that the eigenvalues are not invariant under
multiplication with elliptic systems.
We shall instead study
{\em quasi-symmetrizable} systems, which generalize
the normal forms of the scalar symbol at the boundary of the
numerical range of the principal symbol, see Example~\ref{scalarcase}.

\begin{defn} \label{QS} The $N \times N$ system  $P(w) \in
C^\infty(T^*X)$ is {\em quasi-symmetrizable}  with respect to a
{real} $C^\infty$ vector field  
$V$ in ${\Omega} \subseteq T^*X$ if $\exists\ N\times N$ system $ M(w)
\in C^\infty(T^*X)$ so that
\begin{align}
&\re \w{M(VP)u,u} \ge c\mn u^2 - C\mn{Pu}^2 \qquad c >
0   \qquad \forall\, u  \in \bc^N\label{qs1}\\
&\im \w{MPu,u} \ge  - C\mn{Pu}^2 \qquad \forall\, u  \in \bc^N\label{qs2}
\end{align}
on ${\Omega}$, the system 
$M$ is called a {\em symmetrizer} for $P$.
If $P \in {\Psi}^m_{cl}(X)$ then it is quasi-symmetrizable if
the homogeneous principal symbol~${\sigma}(P)$ is quasi-symmetrizable
when $|{\xi}|=1$, one can then choose a homogeneous symmetrizer~$M$.
\end{defn}

The definition is clearly independent
of the choice of coordinates in $T^*X$ and choice of basis in
$\bc^N$.
When $P$ is elliptic, we find that $P$
is quasi-symmetrizable with respect to any vector field since $\mn{Pu}
\cong \mn{u}$.
Observe that the set of
symmetrizers ~~$M$ satisfying~~\eqref{qs1}--\eqref{qs2} is a
convex cone, a sum of two 
multipliers is also a multiplier. Thus for a given vector field ~$V$ it
suffices to make a local choice of symmetrizer and then use a partition
of unity to get a global one.

\begin{exe}\label{scalarcase}
A scalar function $p \in C^\infty$ is quasi-symmetrizable
if and only
\begin{equation}\label{normform}
  p(w) = e(w)(w_1 + if(w')) \qquad w= (w_1,w')
\end{equation} 
for some choice of coordinates, where $f \ge 0$. Then $0$ is at the boundary of the 
numerical range of~$p$.
\end{exe}

In fact, it is obvious that $p$ in~\eqref{normform} is  quasi-symmetrizable. 
On the other hand, if $p$ is quasi-symmetrizable then there exists $m
\in C^\infty$ such that $mp = p_1 + ip_2$ where $p_j$ are real satisfying
$\partial_{\nu} p_1 > 0$ and $p_2 \ge 0$. Thus $0$ is at the boundary of the
numerical range of ~$p$. By using Malgrange
preparation theorem and changing coordinates as in the proof of Lemma~4.1 in ~\cite{dsz}, 
we obtain the normal form~\eqref{normform} with  $\pm f \ge 0$.

Taylor has studied  {\em symmetrizable} systems
of the type $D_t\id + i K$, for which there exists $R > 0$
making $RK$ symmetric (see Definition~4.3.2 in \cite{ta:pseu}). 
These systems are quasi-symmetrizable with respect to $\partial_{\tau}$ with
symmetrizer~$R$.
We shall denote $\re A =
\frac{1}{2}(A + A^*)$ and $i\im A = \frac{1}{2}(A - A^*)$ the symmetric
and antisymmetric parts of the matrix ~$A$.
Next, we recall the following result from Proposition~4.7  in ~\cite{de:pseudospec}.

\begin{rem}\label{princlem}
If the  $N \times N$ system $P(w) \in C^\infty$ is
quasi-symmetrizable then it is of principal type.
Also, the symmetrizer $M$ is invertible if $\im MP \ge c P^*P$
for some $c> 0$.
\end{rem}

Observe that by adding $i{\varrho}P^*$ to $M$ we may assume that $Q = MP$ satisfies
\begin{equation} \label{immp}
\im Q \ge
({\varrho}-C)P^*P \ge P^*P \ge c Q^*Q \qquad c > 0
\end{equation} 
for ${\varrho} \ge C+1$, and then the symmetrizer is invertible by
Remark~\ref{princlem}.

\begin{rem}\label{eqrem}
The system $P\in C^\infty$ is quasi-symmetrizable with respect to $V$
if and only if there exists an invertible symmetrizer~$M$ such that $Q = MP$ satisfies 
\begin{align}
&\re \w{ (V Q) u,u} \ge c\mn u^2 - C\mn{Qu}^2   \qquad c > 0 \label{qs1b}\\
&\im \w{Q u,u} \ge 0 \label{qs2b}
\end{align}
for any $u \in \bc^N$.
\end{rem}

In fact, by the Cauchy-Schwarz inequality we find
\begin{equation*}
|\w{(V M)Pu,u}| \le {\varepsilon}\mn u^2 +
C_{\varepsilon}\mn{Pu}^2 
 \qquad \forall\, {\varepsilon} >0 \quad \forall\, u \in \bc^N
\end{equation*}
Since $M$ is invertible, we also have that $\mn {Pu} \cong \mn{Qu}$.

\begin{defn}
If $Q\in C^\infty(T^*X)$ satisfies ~\eqref{qs1b}--\eqref{qs2b} then $Q$
is {\em  quasi-symmetric} with respect to the real $C^\infty$ vector field~$V$.
\end{defn}

The invariance properties of quasi-symmetrizable systems is partly due to the
following properties of semibounded matrices. Let $U + V = \set{u + v:\ u \in U\  \land\  v \in V}$ for
linear subspaces $U$ and $V$ of $\bc^N$.

\begin{lem}\label{semiprop}
Assume that $Q$ is an $N \times N$ matrix such that $\im zQ \ge 0$
for some $0 \ne z \in \bc$. Then we find 
\begin{equation}\label{kereq}
 \ker Q = \ker Q^* = \ker (\re Q) \bigcap \ker (\im Q)
\end{equation}
and $\ran Q = \ran (\re Q) + \ran (\im Q) \bot \ker Q$.
\end{lem}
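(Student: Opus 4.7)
The plan is to reduce everything to the standard fact that a positive semidefinite Hermitian matrix $H$ satisfies $\langle Hu,u\rangle=0 \iff Hu=0$, applied to $H=\im(zQ)$.

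First I would show $\ker Q = \ker Q^*$. Take $u \in \ker Q$. Then $\langle zQ u, u\rangle = 0$ and its conjugate $\langle \bar z Q^* u, u\rangle = \bar z\langle u, Qu\rangle = 0$, so $\langle \im(zQ) u, u\rangle = 0$. Since $\im(zQ) \ge 0$ is Hermitian and positive semidefinite, this forces $\im(zQ) u = 0$, i.e., $zQu = \bar z Q^* u$. Combined with $Qu = 0$ and $z\ne 0$, this gives $Q^* u = 0$. The reverse inclusion is symmetric: if $Q^* u = 0$ then $\langle Qu,u\rangle = \overline{\langle Q^*u,u\rangle} = 0$, so again $\langle \im(zQ) u,u\rangle = 0$, and the same argument yields $Qu = 0$.

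Next I would establish the identity $\ker Q = \ker(\re Q) \cap \ker(\im Q)$. The inclusion $\supseteq$ is immediate from $Q = \re Q + i\im Q$. For $\subseteq$, if $u \in \ker Q = \ker Q^*$, then
\begin{equation*}
\re Q \cdot u = \tfrac12(Q + Q^*) u = 0, \qquad \im Q \cdot u = \tfrac1{2i}(Q - Q^*) u = 0,
\end{equation*}
which gives the other inclusion and completes \eqref{kereq}.

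Finally, for the range statement, I would take orthogonal complements. Since $\re Q$ and $\im Q$ are Hermitian, $\ran(\re Q) = (\ker \re Q)^\perp$ and $\ran(\im Q) = (\ker \im Q)^\perp$, and for any matrix $\ran Q = (\ker Q^*)^\perp$. Using the already established $\ker Q = \ker Q^*$ together with \eqref{kereq} and the standard identity $(U \cap V)^\perp = U^\perp + V^\perp$ for subspaces, I obtain
\begin{equation*}
\ran Q = (\ker Q)^\perp = \bigl(\ker(\re Q) \cap \ker(\im Q)\bigr)^\perp = \ran(\re Q) + \ran(\im Q),
\end{equation*}
and the orthogonality $\ran Q \perp \ker Q$ is just the identity $\ran Q = (\ker Q)^\perp$ rewritten.

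There is no real obstacle here: the only non-formal step is the passage from $\langle Hu,u\rangle = 0$ to $Hu = 0$, which holds because $H = \im(zQ)$ is Hermitian and positive semidefinite (so $H = B^*B$ for some $B$, and $\|Bu\|^2 = 0$ gives $Hu = B^*Bu = 0$). Everything else is formal manipulation of adjoints and orthogonal complements.
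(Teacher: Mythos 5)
Your proof is correct and takes essentially the same route as the paper: both hinge on the fact that positive semidefiniteness of $\im (zQ)$ upgrades $\w{\im(zQ)u,u}=0$ to $\im(zQ)u=0$, from which the kernel identities follow and the range statement comes by passing to orthogonal complements. The only cosmetic differences are that the paper first normalizes to $z=1$ and closes the reverse inclusions by a rank-theorem dimension count, whereas you argue both inclusions directly and invoke $(U\cap V)^\perp=U^\perp+V^\perp$.
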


\begin{proof}
By multiplying with $z$ we may assume that $\im Q \ge 0$, clearly the
conclusions are invariant under multiplication with complex numbers. If
$u \in \ker Q$, then we have  $ \w{\im Qu,u} = \im\w{Qu,u}= 0$.  By
using the Cauchy-Schwarz inequality on $\im Q \ge 0$ we find that 
$\w{\im Qu,v} = 0$ for any~$v$. Thus $u \in \ker (\im Q)$ so
$\ker Q \subseteq \ker Q^*$. We get equality and ~\eqref{kereq} by the
rank theorem, since $\ker Q^* = \ran Q^\bot$. 

For the last statement we observe that $\ran Q \subseteq \ran (\re Q)
+ \ran (\im Q) = (\ker Q)^{\bot}$ by~\eqref{kereq} where we also get equality by the
rank theorem.
\end{proof}

\begin{prop}\label{invrem0}
If $Q  \in C^\infty(T^*X)$ is quasi-symmetric and ~$E \in C^\infty(T^*X)$ is
invertible, then~ $E^*QE$ and~$-Q^*$ are quasi-symmetric. 
\end{prop}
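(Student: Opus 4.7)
The plan is to verify directly the two inequalities \eqref{qs1b} and \eqref{qs2b} defining quasi-symmetric, using the vector field $V$ for $E^*QE$ and $-V$ for $-Q^*$. The imaginary part condition \eqref{qs2b} is immediate in both cases: $\im\w{E^*QEu,u} = \im\w{Q(Eu),Eu} \ge 0$ and $\im\w{-Q^*u,u} = \im\w{Qu,u} \ge 0$. The real part condition \eqref{qs1b} is where the argument lies.

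For $-Q^*$, since $V$ has real coefficients one has $V(Q^*) = (VQ)^*$, so
\begin{equation*}
\re\w{(-V)(-Q^*)u,u} = \re\w{V(Q^*)u,u} = \re\w{(VQ)u,u} \ge c\mn u^2 - C\mn{Qu}^2.
\end{equation*}
To conclude I need a bound $\mn{Qu}^2 \le K\mn{Q^*u}^2 + \epsilon\mn u^2$ with $\epsilon$ arbitrarily small. Starting from the identity $\mn{Qu}^2 - \mn{Q^*u}^2 = 4\im\w{(\re Q)u,(\im Q)u}$ and using $\im Q \ge 0$ to get $\mn{(\im Q)u}^2 \le \mn{\im Q}\w{(\im Q)u,u} = \mn{\im Q}\im\w{Qu,u} \le \mn{\im Q}\mn u\mn{Q^*u}$ (the last step via $\w{Qu,u} = \w{u,Q^*u}$), a Young inequality with scaling parameter $\alpha$ gives
\begin{equation*}
\mn{Qu}^2 \le (1 + C_1\alpha^4)\mn{Q^*u}^2 + C_2\alpha^{-4/3}\mn u^2.
\end{equation*}
Taking $\alpha$ large makes the $\mn u^2$ coefficient as small as desired, and the error is absorbed into the $c\mn u^2$ margin.

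For $Q_1 = E^*QE$ I expand
\begin{equation*}
VQ_1 = (VE^*)QE + E^*(VQ)E + E^*Q(VE).
\end{equation*}
The middle piece supplies the main lower bound: $\re\w{(VQ)(Eu),Eu} \ge c\mn{Eu}^2 - C\mn{Q(Eu)}^2 \ge c'\mn u^2 - C'\mn{Q_1 u}^2$, using invertibility of $E$ and $E^*$. Writing $VE = EF$ with $F = E^{-1}(VE)$ bounded, the first and third pieces combine to $2\re\w{(\re Q_1)u, Fu} = 2\re\w{(\re Q)(Eu),(VE)u}$, controlled by $(\mn{Q(Eu)} + \mn{Q^*(Eu)})\mn{VE}\mn u$. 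Bounding $\mn{Q^*(Eu)}$ by $\mn{Q(Eu)}$ and $\mn{Eu}$ via the same scaling estimate applied to $v = Eu$, and then applying AM--GM, reduces the residue to a small multiple of $\mn u^2$ plus a large multiple of $\mn{Q_1 u}^2$, preserving the positivity of the $\mn u^2$ coefficient.

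The main obstacle in both parts is the passage from a bound in $\mn{Qu}$ to one in $\mn{Q^*u}$. A naive Cauchy--Schwarz argument produces an $\mn u^2$ error whose coefficient may swamp the $c$ available in \eqref{qs1b}. The decisive ingredient is $\im Q \ge 0$ (which, via Lemma~\ref{semiprop}, gives $\ker Q = \ker Q^*$ and pointwise control of $\mn{(\im Q)u}$) combined with Young's inequality bearing a free scaling parameter, so that the $\mn u^2$ coefficient is tunable at the cost of an enlarged but harmless coefficient of $\mn{Q^*u}^2$.
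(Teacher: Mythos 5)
Your proof is correct, but it takes a genuinely different route from the paper's. The paper first shows that \eqref{qs1b} is equivalent to the weaker-looking condition \eqref{qs1a}, i.e.\ positivity of $\re\w{(VQ)u,u}$ only on $\ker Q$ (using that $\mn u \le C\mn{Qu}$ on $\ker Q^\bot$), and then checks \eqref{qs1a} for $E^*QE$ and $-Q^*$: on $\ker Q_E$ the Leibniz cross-terms $(VE^*)QE$ and $E^*Q(VE)$ vanish identically, the latter because $\ker Q_E = \ker Q_E^*$ by Lemma~\ref{semiprop}. You instead keep the full inequality \eqref{qs1b} and absorb the troublesome terms quantitatively, via the identity $\mn{Qu}^2 - \mn{Q^*u}^2 = 4\im\w{(\re Q)u,(\im Q)u}$ together with $\mn{(\im Q)u}^2 \le \mn{\im Q}\,\im\w{Qu,u}$ (this is where $\im Q \ge 0$ enters for you, rather than through the kernel identity) and a Young inequality with a free parameter, so that the $\mn u^2$-error can be made smaller than the margin $c$ at the cost of a large but harmless coefficient on $\mn{Q^*u}^2$ (resp.\ $\mn{Q_Eu}^2$); your treatment of the cross-terms for $E^*QE$, rewriting them as $2\re\w{(\re Q)Eu,(VE)u}$ and estimating $\mn{Q^*Eu}$ by the same comparison, is sound. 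What each approach buys: the paper's kernel reduction is shorter and makes the extra terms disappear exactly, with no norm comparisons needed; your argument is more explicit about constants (they depend only on $c$, $C$, local bounds on $\re Q$, $\im Q$, $E$, $E^{-1}$, $VE$) and avoids invoking the lower bound of $Q^*Q$ on $\ker Q^\bot$, which is a pointwise quantity that can degenerate as the base point varies, so your version is arguably more transparently uniform on compact sets.
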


\begin{proof}
First we note that~\eqref{qs1b} holds if and only if 
\begin{equation}\label{qs1a}
 \re \w{(V Q)u,u} \ge c\mn u^2\qquad \forall\,u \in \ker Q
\end{equation}
for some $c >0 $. In fact, $Q^*Q$ has a positive lower bound on  
the orthogonal complement $\ker Q^{\bot}$ so that 
\begin{equation*}
 \mn {u} \le C\mn{Qu} \qquad \text{for $u \in \ker Q^{\bot}$}
\end{equation*}
Thus, if $u = u' + u''$ with $u' \in \ker
Q$ and $u'' \in \ker Q^{\bot}$ we find that $Qu = Qu''$,
\begin{equation*}
 \re \w{(V Q) u',u''} \ge -{\varepsilon}\mn {u'}^2 - C_{\varepsilon}\mn
 {u''}^2 \ge  -{\varepsilon}\mn {u'}^2 - C'_{\varepsilon}\mn {Qu}^2
 \qquad \forall\, {\varepsilon} > 0 
\end{equation*}
and $\re \w{(V  Q)u'',u''} \ge -C \mn{u''}^2 \ge -C'\mn{Qu}^2$. By
choosing ${\varepsilon}$ small enough we obtain ~\eqref{qs1b}
by using ~\eqref{qs1a} on $u'$.

Next, we note that $\im Q^* = -\im Q$ and $\re Q^* = \re Q$, so~$-Q^*$ satifies
~\eqref{qs2b} and~\eqref{qs1a} with $V$ replaced by $-V$, and thus it is quasi-symmetric.
Finally, we shall show that $Q_E = E^*QE$ is quasi-symmetric when $E$
is invertible. We obtain from ~\eqref{qs2b} that 
$$\im \w{Q_E u,u} = \im \w{Q Eu,Eu} \ge 0 
\qquad \forall\ u  \in \bc^N 
$$
Next, we shall show that $Q_E$ satisfies~\eqref{qs1a} on $\ker Q_E = E^{-1}\ker
Q$, which will give~\eqref{qs1b}. We find from
Leibniz' rule that $ V  Q_E = (V  E^*)Q E + E^* (V  Q) E
+ E^*Q (V  E)$ where ~\eqref{qs1a} gives
\begin{equation*}
 \re \w{E^*( V  Q)Eu,u} \ge c\mn{Eu}^2 \ge c'\mn u^2 \qquad u \in
 \ker Q_E\quad c'> 0
\end{equation*}
since then $Eu \in \ker Q$. Similarly we obtain that $\w{( V  E^*)Q Eu,u} =
0$ when $u \in \ker Q_E$. Now since $\im Q_E \ge 0$ we find from 
Lemma~\ref{semiprop} that
\begin{equation}\label{qstarprop}  
\ker Q^*_E = \ker Q_E
\end{equation}
which gives
$ 
\w{E^*Q ( V  E)u,u} = \w{E^{-1}( V  E)u, Q^*_Eu} =0
$
when $u \in \ker Q_E = \ker Q^*_E$. Thus $Q_E$ satisfies ~\eqref{qs1a}
so it is quasi-symmetric, which finishes the proof.
\end{proof}

\begin{prop}\label{invprop}
Let  $P(w) \in C^\infty(T^*X)$ be a quasi-symmetrizable $N \times
N$ system, then $P^*$ is quasi-symmetrizable. If  $A(w)$ and $B(w) \in
C^\infty(T^*X)$ are  invertible $N \times N$ systems then $BPA$ is
quasi-symmetrizable.
\end{prop}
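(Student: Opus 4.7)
My plan is to reduce both assertions to Proposition~\ref{invrem0} via Remark~\ref{eqrem}. By Remark~\ref{eqrem}, since $P$ is quasi-symmetrizable with respect to some real $C^\infty$ vector field $V$, we may choose an invertible symmetrizer $M$ such that $Q = MP$ is quasi-symmetric with respect to $V$. Then the task reduces to exhibiting invertible matrices that, multiplied against $P^*$ or $BPA$ on the left, produce a quasi-symmetric system—at which point Remark~\ref{eqrem} applied in the reverse direction will yield quasi-symmetrizability.

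For $P^*$, I would first invoke Proposition~\ref{invrem0} to conclude that $-Q^*$ is quasi-symmetric (now with respect to $-V$), and then apply the invariance $R \mapsto E^* R E$ from the same proposition with the judicious choice $E = (M^*)^{-1}$, so that $E^* = M^{-1}$. A direct computation gives
$$
E^* (-Q^*) E = M^{-1} (-P^* M^*) (M^*)^{-1} = -M^{-1} P^*,
$$
so $-M^{-1} P^*$ is quasi-symmetric. Thus $-M^{-1}$ is an invertible symmetrizer for $P^*$, and Remark~\ref{eqrem} concludes that $P^*$ is quasi-symmetrizable (with respect to $-V$).

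For $BPA$ with $A$, $B$ invertible, I would apply the same invariance, but to $Q$ itself and with the choice $E = A$. By Proposition~\ref{invrem0}, $A^* Q A$ is quasi-symmetric. Rewriting
$$
A^* Q A = A^* M P A = (A^* M B^{-1}) (BPA),
$$
we see that $A^* M B^{-1}$ is an invertible symmetrizer for $BPA$, so Remark~\ref{eqrem} gives quasi-symmetrizability (with respect to $V$).

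The only real work lies in Proposition~\ref{invrem0}, which is already proved; the present proposition is essentially a bookkeeping exercise of choosing the right $E$ to absorb the extra factors $M$ and $B$. There is no genuine obstacle here—the two assertions fall out once one recognizes that the congruence $R \mapsto E^*RE$ provided by Proposition~\ref{invrem0} is flexible enough to absorb both left- and right-multiplication by an invertible factor, modulo adjusting the symmetrizer.
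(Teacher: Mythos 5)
Your argument is correct and follows essentially the same route as the paper: both reduce everything to Proposition~\ref{invrem0} (invariance of quasi-symmetry under the congruence $R \mapsto E^*RE$ and under $R \mapsto -R^*$) combined with the symmetrizer flexibility of Remark~\ref{eqrem}. The only difference is bookkeeping --- you absorb the extra factors directly into explicit symmetrizers $A^*MB^{-1}$ and $-M^{-1}$ (being, if anything, a bit more careful with the sign in the adjoint case), whereas the paper factors $BPA = B(A^*)^{-1}A^*PA$ and quotes left-multiplication invariance separately; the substance is identical.
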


\begin{proof}
Clearly ~\eqref{qs1b}--\eqref{qs2b} are invariant under
{\em left} multiplication of $P$ with invertible systems~$E$,
just replace~$M$ with $ME^{-1}$.
Since we may write $BPA = B(A^*)^{-1}A^*PA$ it suffices to show that
$E^*PE$ is quasi-symmetrizable if ~$E$ is invertible.
By Remark~\ref{eqrem} there exists a symmetrizer~$M$ so that 
$Q = MP$ is quasi-symmetric, i.e., satisfies ~\eqref{qs1b}--\eqref{qs2b}.
It then follows from Proposition~\ref{invrem0} that
$$Q_E = E^*QE = E^*M(E^*)^{-1}E^*PE$$
is quasi-symmetric, thus $E^*PE$ is
quasi-symmetrizable. 

Finally, we shall prove that $P^*$ is quasi-symmetrizable if $P$ is.
Since $Q = MP$ is quasi-symmetric, we find from
Proposition~\ref{invrem0} that $Q^* = P^*M^*$ is quasi-symmetric. By
multiplying with $(M^*)^{-1}$ from right, we find from the first part of
the proof that $P^*$ is quasi-symmetrizable.
\end{proof}

For scalar symbols of principal type, we find from the normal form in
Example~\ref{scalarcase} that $0$ is on the boundary of the local numerical
range of the principal symbol. This need not be the case for systems
by the following example.  

\begin{exe}
Let
\begin{equation*}
 P(w) = 
\begin{pmatrix}
w_2+ i w_3 & w_1 \\ w_1 & w_2-i w_3  
\end{pmatrix}
\end{equation*}
which is quasi-symmetrizable with respect to $\partial_{w_1}$
with symmetrizer $M = 
\begin{pmatrix}
0 & 1 \\ 1 & 0 
\end{pmatrix}$.
In fact, $\partial_{w_1}MP = \id_2$ and
\begin{equation*}
 MP(w) = 
\begin{pmatrix}
w_1 & w_2 - i w_3 \\ w_2 + i w_3 & w_1 
\end{pmatrix} =  (MP(w))^*
\end{equation*}
so $\im MP \equiv 0$.
Since eigenvalues of $P(w)$ are  $w_2 \pm \sqrt{w_1^2 - w_3^2}$ we
find that $0$ is not a boundary point of the local numerical
range of the eigenvalues. 
\end{exe}

For quasi-symmetrizable systems we have the following 
semiglobal solvability result.

\begin{thm}\label{QSthm}
Assume that $P\in {\Psi}^m_{cl}(X)$ is an $N \times N$ system
and that there exists a real valued function
$T(w) \in C^\infty(T^*X)$ such that $P $ is quasi-symmetrizable 
with respect to the Hamilton vector field $H_T(w)$ in a neighborhood
of a compactly based cone $K\subset T^*X$. Then $P$ is locally solvable
at~ $K$.
\end{thm}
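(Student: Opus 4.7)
By standard Hahn--Banach duality (cf.\ \cite[Theorem~26.4.7]{ho:yellow}), local solvability of~$P$ at the compactly based cone~$K$ is implied by an a priori estimate for the adjoint, of the form
\begin{equation*}
\|u\|_{(-s)} \le C\bigl(\|P^* u\|_{(-s+m-1)} + \|u\|_{(-N)}\bigr),
\end{equation*}
valid for all $u \in C_0^\infty(X,\bc^N)$ with wave front set in a conic neighborhood of~$K$, for some fixed~$s$ and any large~$N$. By Proposition~\ref{invprop}, $P^*$ is itself quasi-symmetrizable (with respect to $H_{-T}$, as can be read off from the proof of Proposition~\ref{invrem0}), so it suffices to establish an estimate of this shape for an arbitrary quasi-symmetrizable system, which, relabeling, we continue to call~$P$.

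Pick a symmetrizer~$M$, modified as in~\eqref{immp} so as to be invertible, and set $Q = MP$ with principal symbol~$q$ satisfying, pointwise as matrices near~$K$,
\begin{equation*}
\im q \ge c\, q^* q, \qquad \re(H_T q) \ge c\,\id - C\, q^* q, \qquad c > 0.
\end{equation*}
Quantizing~$q$ to $\wt Q \in \Psi_{cl}^m$, and using ellipticity of the quantization of~$M$, the task reduces to a bound of the form $\|u\| \le C'\|\wt Q u\| + (\text{lower order})$ for $u$ microlocalized near~$K$. The input is the matrix-valued sharp G\aa rding inequality applied to the nonnegative symbol $\re(H_T q) + C\, q^*q - c\,\id$, combined with the symbol-calculus identity $\op(H_T q) \equiv -i[\op(T), \wt Q] \pmod{\Psi^{m-1}}$ and the self-adjointness $\op(T)^* = \op(T)$ (since $T$ is real), which converts the commutator's diagonal matrix element into $2\,\im\langle \wt Q u, \op(T) u\rangle$; this yields the preliminary inequality
\begin{equation*}
c\|u\|^2 \le 2\,\im\langle \wt Q u, \op(T) u\rangle + C\|\wt Q u\|^2 + C'\|u\|_{(m-\tfrac12)}^2.
\end{equation*}

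The main obstacle is to close this inequality. A direct Cauchy--Schwarz on the cross term gives $\|\op(T) u\|^2$ on the right, which only controls $C_0\|u\|^2$ for some $C_0$ depending on~$T$, and if $C_0 \ge c$ no absorption is possible. The resolution is to exploit the freedom to rescale: replacing~$T$ by $\epsilon T$ with $\epsilon>0$ small preserves quasi-symmetrizability (with positivity constant rescaled to~$\epsilon c$), while the cross-term contribution after Cauchy--Schwarz scales like $\epsilon^2 C_0$, so the linear-in-$\epsilon$ term dominates for $\epsilon$ small enough to close the estimate. Equivalently, one may view~$T$ as a time parameter and perform an energy estimate along the flow of~$H_T$, using the one-sided bound $\im q \ge 0$ for monotonicity of the $L^2$~norm. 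In either implementation, the delicate point is the careful bookkeeping of matrix-valued lower-order and subprincipal contributions arising from G\aa rding and the symbol calculus, in the spirit of the techniques developed in~\cite{de:pseudospec}. Once these are controlled, the resulting microlocal estimate together with the elliptic parametrix of~$M$ yields the required a priori bound for~$P^*$, and hence the local solvability of~$P$ at~$K$.
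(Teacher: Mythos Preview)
There is a genuine gap. The commutator identity you invoke is not correct: for real~$T$ one has
\[
\re\bigl\langle -i[\op(T),\wt Q]\,u,\,u\bigr\rangle \;=\; 2\,\im\bigl\langle (\re\wt Q)\,u,\ \op(T)u\bigr\rangle,
\]
with $\re\wt Q=\tfrac12(\wt Q+\wt Q^*)$, not $2\,\im\langle \wt Q u,\op(T)u\rangle$. The discrepancy is the term $2\,\re\langle(\im\wt Q)u,\op(T)u\rangle$, whose principal symbol is $2T\,\im q$, of order one (after reducing to $m=1$) and of no definite sign. This is exactly where the hypothesis $\im q\ge 0$ must enter, and your main argument never uses it: the rescaling $T\mapsto\epsilon T$ would apply verbatim to the scalar symbol $q=\tau+if$ with $f$ real of arbitrary sign, and would therefore ``prove'' solvability of unsolvable operators. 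Once the identity is corrected, the rescaling no longer closes the estimate either: the stray term $2\epsilon\,\re\langle(\im\wt Q)u,\op(T)u\rangle$ is of size $\epsilon\,C\|u\|_{(1/2)}^2$, which cannot be absorbed by $\epsilon c\,\|u\|^2$ on the left however small $\epsilon$ is. Your parenthetical ``energy estimate along $H_T$ using $\im q\ge0$'' points in the right direction, but it is not equivalent to the rescaling and is not carried out.

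The paper proceeds differently. One conjugates $Q=MP^*\in\Psi^1_{cl}$ by the Carleman weight $e^{\gamma T}$ to form $Q_\gamma=e^{-\gamma T}Q\,e^{\gamma T}$, whose imaginary part has symbol $\im Q_\gamma=\im Q+\gamma\,\re H_TQ$ modulo~$S^{-1}$. The two hypotheses $\im q\ge0$ and $\re H_Tq\ge c-C|\xi|^{-1}\im q$ are thus combined \emph{at the symbol level before quantization}; sharp G\aa rding is then applied to each piece. The crucial point is that in the resulting operator inequality the coefficient multiplying $\im Q$ is $1+O(\langle D\rangle^{-1})$, so its G\aa rding remainder is bounded in~$\Psi^0$ uniformly in~$\gamma$, while the positive contribution $\gamma c_0\chi$ grows. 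Taking $\gamma$ large then yields $\im Q_\gamma(x,D)\ge c_1\gamma\,\phi^2(x,D)$ modulo $\Psi^{-1}$ and terms microsupported off~$K$, from which the a~priori estimate for~$P^*$ and hence solvability of~$P$ follow.
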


The cone $K\subset T^*X$ is compactly based if
$K\bigcap \set{(x,{\xi}):\ |{\xi}| = 1}$ is compact.
We also get the following local result:

\begin{cor}
Let $P\in {\Psi}_{cl}^m(X)$ be an $N \times N$ system
that is is quasi-symmetrizable at~$w_0 \in T^*X$.
Then $P$ is locally solvable at~$w_0$.
\end{cor}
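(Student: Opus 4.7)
The plan is to derive the corollary from Theorem~\ref{QSthm} by matching the given quasi-symmetrizing vector field to a Hamilton vector field near~$w_0$, and then applying the semiglobal result on a small compactly based cone through~$w_0$. I may assume $|\xi(w_0)|=1$.

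By hypothesis there is an open conic neighborhood~$\Omega$ of~$w_0$, a real $C^\infty$ vector field~$V$ on~$\Omega$, and a homogeneous symmetrizer~$M$ such that $\sigma(P)$ satisfies \eqref{qs1}--\eqref{qs2} on $\Omega\cap\set{|\xi|=1}$ with some $c>0$. The key observation is that these inequalities depend on the vector field~$V$ only \emph{pointwise}, since $(V\sigma(P))(w)=\w{V(w),d\sigma(P)(w)}$ involves only the value~$V(w)$. Consequently, substituting for~$V$ any other vector field with the same value at~$w_0$ preserves the inequalities at~$w_0$ with the same constants.

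Next I build a real-valued $T\in C^\infty(T^*X)$ with $H_T(w_0)=V(w_0)$. In Darboux coordinates $(x,\xi)$ centered at~$w_0$, if $V(w_0)=\sum_j(a_j\partial_{x_j}+b_j\partial_{\xi_j})$, then the linear function $T_0(x,\xi)=\sum_j(a_j\xi_j-b_jx_j)$ has constant Hamilton vector field $H_{T_0}\equiv V(w_0)$; cutting off and extending yields the desired~$T$. Condition~\eqref{qs2} does not involve the vector field and is therefore unchanged. For~\eqref{qs1}, by joint continuity of $\sigma(P)$, $d\sigma(P)$, $M$, $V$, and~$H_T$, the difference $\w{M((V-H_T)\sigma(P))u,u}$ tends to zero uniformly for $\mn u=1$ as $w\to w_0$, so on a possibly smaller conic neighborhood $\Omega'\subset\Omega$ of~$w_0$,
\[
\re\w{M(H_T\sigma(P))u,u}\ge \tfrac{c}{2}\,\mn u^2 - C'\mn{\sigma(P)u}^2
\]
holds on $\Omega'\cap\set{|\xi|=1}$. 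Hence $P$ is quasi-symmetrizable on~$\Omega'$ with respect to the Hamilton vector field~$H_T$.

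Finally I choose a thin compactly based cone $K$ with $w_0\in K\subset\Omega'$, for instance a conic neighborhood of the ray through~$w_0$ whose intersection with $\set{|\xi|=1}$ has closure inside~$\Omega'$. Theorem~\ref{QSthm} then gives local solvability of $P$ at $K$, and in particular at~$w_0$. The only delicate step is the uniform continuity/perturbation estimate above, but this is routine given smoothness of all data and the strict positivity of~$c$ in~\eqref{qs1} at~$w_0$.
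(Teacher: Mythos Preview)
Your argument is correct and follows exactly the route the paper takes: the paper's proof is the single sentence ``This follows since we can always choose a function $T$ such that $V = H_T$ at~$w_0$,'' and you have supplied the details of that construction (linear $T$ in Darboux coordinates) together with the continuity argument needed to pass from the pointwise equality $H_T(w_0)=V(w_0)$ to the inequalities on a conic neighborhood. There is no substantive difference in approach.
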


This follows since we can always choose a function $T$ such
that $V  = H_T$ at ~$w_0$.
Recall that a semibicharacteristic of ${\lambda}\in C^\infty$ is a
bicharacteristic of $\re (a{\lambda})$ for some $0 \ne a \in C^\infty$.

\begin{rem} \label{qshamrem}
If $Q$ is quasi-symmetric with respect to $H_T$ then the limit set at
the characteristics of the non-trivial semibicharacteristics of the
eigenvalues close to zero of ~$Q$ is a union of curves on which $T$ is
strictly monotone, thus they cannot form closed orbits.
\end{rem}

In fact, we have that an eigenvalue 
${\lambda}(w)$ is $C^\infty$ almost everywhere. 
The Hamilton vector field $H_{\re z\lambda}$ then  gives the
semibicharacteristics of~${\lambda}$, and that is determined by
$\w{dQ u,u}$ with $0 \ne u \in
\ker(P - {\lambda}\id_N)$ by the invariance property given by~\eqref{dapb}. Now
$\re \w{(H_TQ)u,u} > 0$ and
$\im d\w{ Qu,u} = 0$ for $u \in \ker P$ by  ~\eqref{qs1b}--\eqref{qs2b}.
Thus by picking subsequences when ${\lambda} \to 0$ we find that the limits of non-trivial
semibicharacteristics of the eigenvalues close to zero give curves on  
which $T$ is strictly monotone, since $H_T{\lambda} \ne 0$.

\begin{exe}\label{sex}
Let  
\begin{equation*}
 P(t,x;{\tau},{\xi}) = {\tau}M(t,x,{\xi})+ iF(t,x,{\xi}) \in
 S^1_{cl}
\end{equation*}
where  $M  \ge c_0 > 0$ and $F \ge 0$. Then $P$ is quasi-symmetrizable with respect to $\partial
_{{\tau}}$ with symmetrizer $\id_N$, so Theorem~\ref{QSthm} gives that
$P(t,x,D_t,D_x)$ is locally solvable.
\end{exe}

\begin{proof}[Proof of Theorem~\ref{QSthm}]
We shall modify the proof of Theorem~4.15 in ~\cite{de:pseudospec},
and derive estimates for the $L^2$ adjoint ~$P^*$ which will give solvability. By
Proposition~\ref{invprop} we find that $P^*$ is quasi-symmetrizable in
~$K$. By the invariance of the conditions, we may multiply with an elliptic
scalar operator to obtain that $P^* \in {\Psi}^1_{cl}$.
By the assumptions, Definition~\ref{QS} and~\eqref{immp}, we find
that there exists a real valued function $T(w) \in C^\infty$ and a
symmetrizer $M (w) \in C^\infty$ so that
$ Q = MP^*$ satisfies
\begin{align}
&\re H_T Q \ge c  - C_0 Q^*Q  \ge c-C_1\im Q \label{9}\\
&\im Q \ge c\, Q^*Q  \ge 0\label{10}
\end{align}
when $|{\xi}| = 1$ near $K$
for some $c > 0$, and we find that $M$ is invertible by
Remark~\ref{princlem}.
Extending by homogeneity, we may assume that ~$M$ and~~$T$ are
homogeneous  of degree ~0 in~${\xi}$, then~$T \in  S^{0}_{1,0}$ and
~$Q \in S^{1}_{1,0}$. Let
\begin{equation} 
M(x,D)P^*(x, D) = Q(x, D)\in {\Psi}^{1}_{cl} 
\end{equation}
which has principal symbol $Q(x,{\xi})$.
Leibniz' rule gives that $\exp(\pm{\gamma}T) \in
 S^0_{1,0}$ for any ${\gamma} > 0$, so we
can define
\begin{equation*}
Q_{\gamma}(x, D) = 
\exp(-{\gamma}T)(x, D)Q(x, D)\exp({\gamma}T)(x, D) \in {\Psi}^{1}_{cl}
\end{equation*}
Since $T$ is a scalar function, we obtain that the symbol of
\begin{equation}\label{11}
 \im Q_\gamma = Q_1 + {\gamma}Q_0 \qquad\text{modulo $S^{-1}$ near $K$}
\end{equation}
where $0 \le Q_1 = \im Q \in S^1$ and $Q_0 \in S^0$ satisfies
\begin{equation}\label{q1est}
Q_0 = \re H_T Q \ge c - C|{\xi}|^{-1}Q_1 \qquad\text{near $K$}
\end{equation}
by  \eqref{9}, \eqref{10} and homogeneity.

Now take $0 \le {\phi} \in S^0_{1,0}$ such that ${\phi}= 1$ near ~$K$
and ${\phi}$ is supported where~ \eqref{9} and~\eqref{10} hold.
If ${\chi} = {\phi}^2$ then we obtain from ~ \eqref{q1est} and the
sharp G\aa rding inequality~\cite[Theorem~18.6.14]{ho:yellow} that
\begin{equation*}
 Q_0(x, D) \ge c_0 {\chi}(x, D) - C\w{D}^{-1}Q_1(x, D) + R(x, D) + S(x,D)
\end{equation*}
where $c_0 > 0$, $R \in S^{-1}$ and $S \in S^0$ with $\supp S \bigcap K = \emptyset$. Thus we obtain
\begin{equation}\label{12}
\im Q_{\gamma}(x, D) \ge c_0{\gamma}{\chi}(x, D)  + (1 +
{\varrho}_{\gamma})Q_1(x, D) + R_{\gamma}(x, D) + S_{\gamma}(x, D)
\end{equation}
where $R_{\gamma} \in S^{-1}$, ${\varrho}_{\gamma} = -{\gamma}C\w{D}^{-1} \in
{\Psi}^{-1}$ and $S_{\gamma} \in S^0$ with $\supp S_{\gamma} \bigcap K
= \emptyset$. The calculus gives that  ${\chi}(x, D) \cong 
{\phi}(x, D){\phi}(x, D)$ modulo ${\Psi}^{-1}$ and
\begin{equation*}
 (1 + {\varrho}_{\gamma})Q_1(x, D) = (1 +
 {\varrho}_{\gamma}/2))Q_1(x, D)(1 +
 {\varrho}_{\gamma}/2)\qquad\text{modulo ${\Psi}^{-1}$} 
\end{equation*}
By using the sharp G\aa rding inequality
we obtain that $Q_1(x, D) \ge R_0(x, D)$ for some $R_0 \in S^0_{1,0}$. Thus we find
\begin{equation*}
  (1 + {\varrho}_{\gamma})Q_1(x, D) \ge (1 +
  {\varrho}_{\gamma}/2)R_0(x, D) (1 + {\varrho}_{\gamma}/2) = R_0(x, D) \ge -C_0
\end{equation*}
modulo terms in ${\Psi}^{-1}$ (depending on ${\gamma}$). Combining
this with~\eqref{12} and using that $\supp(1 -{\phi}) \bigcap K =
\emptyset$, we find for large enough~${\gamma}$ that 
\begin{equation}\label{qsolvest}
 c_1{\gamma}\mn{{\phi}(x, D)u}^2 \le \im\w{Q_{\gamma}(x,D)u,u} +
 \w{A_{\gamma}(x, D) u,u} + \w{B_{\gamma}(x, D) u,u} \qquad u \in C^\infty_0
\end{equation}
where $c_1 > 0$, $A_{\gamma} \in S^{-1}$ and $B_{\gamma} \in S^0$ with
$\supp B_{\gamma} \bigcap K = \emptyset$. 
Next, we fix ${\gamma}$ and apply this to $\exp\left(-{\gamma}T\right)(x, D) u$.
We find by the calculus that 
\begin{equation*}
 \mn{{\phi}(x, D)u} \le
 C(\mn{{\phi}(x, D)\exp\left(-{\gamma}T\right)(x, D) u} +
 \mn{u}_{(-1)}) \qquad u \in  C^\infty_0
\end{equation*}
We also obtain from the calculus that
$$\exp({\gamma}T)(x, D) \exp({-\gamma}T)(x, D) = 1 + r(x,D)$$
with ~$r \in S^{-1}$, which gives
\begin{multline*}
 Q_{\gamma}(x,D)\exp\left(-{\gamma}T\right)(x, D)  =
 \exp\left(-{\gamma}T\right)(x, D) ( 1 + r(x,D))Q(x, D) \\ + \exp\left(-{\gamma}T\right)(x,
 D)[Q(x,D),r(x,D)] 
\end{multline*}
where $[Q(x,D),r(x,D)] \in {\Psi}^{-1}$.
Since $Q(x, D) = M(x, D)P^*(x, D)$ we find 
$$|\w{ \exp\left(-{\gamma}T\right)(x, D) ( 1 + r(x,D)) Q(x, D)v,\exp(-{\gamma}T)(x,
  D) u}| \le C\mn{P^*(x, D)u} \mn{u}$$
Since $ \mn u \le \mn{{\phi}(x, D)u} + \mn{(1-{\phi}(x, D))u}$ and
${\phi}= 1$ near ~$K$ we obtain that
\begin{equation*}
 \mn{u} \le C\left(\mn{P^*(x,D)u} +  \mn{Q(x,D)u} + \mn u_{(-1)}
 \right) \qquad  u \in C^\infty_0
\end{equation*}
where $Q \in S^0$ with
$\supp Q \bigcap K = \emptyset$. We then obtain the local solvability by 
standard arguments.
\end{proof}

\section{Subellipticity of Systems}

\label{subell}

We shall consider the question when a quasi-symmetrizable system is subelliptic.
Recall that an $N \times N$ system of operators~$P \in
{\Psi}^m_{cl}(X)$ is {\em (micro)subelliptic} with a loss of $ 
{\gamma} < 1$ derivatives at $w_0$ if 
\begin{equation*}
 Pu \in H_{(s)} \text{ at $w_0$}\implies u \in H_{(s+m-{\gamma})} \text{ at $w_0$}
\end{equation*}
for $u \in \Cal D'(X, \bc^N)$. Here $H_{(s)}$ is the standard
Sobolev space of distributions $u$ such that $\w{D}^su \in L^2$. 
We say that $u \in H_{(s)}$ microlocally at~$w_0$ if there exists
$a\in S^0_{1,0}$ such that $a \ne 0$ in a conical neighborhood
of~$w_0$ and $a(x,D)u \in H_{(s)}$. Of
course, ellipticity corresponds to 
${\gamma} = 0$ so we shall assume ${\gamma} > 0$.

\begin{exe} \label{scalarex}
Consider the scalar operator 
\begin{equation*}
 D_t + i f(t, x,D_x)
\end{equation*}
with $0 \le f \in C^\infty(\br, S^1_{cl})$, $(t,x) \in \br\times \br^n$,
then we obtain from Proposition~27.3.1 in ~\cite{ho:yellow} that this operator is
subelliptic with a loss of $k/k+1$ derivatives microlocally near $\set{{\tau}=0}$ if and only if
\begin{equation} \label{scalarcond}
 \sum_{j\le k} |\partial_{t}^jf(t,x,{\xi})| \ne 0 \qquad \forall\, x\ {\xi}
\end{equation} 
where we can choose $k$ even.
\end{exe}

The following example
shows that condition~\eqref{scalarcond} is not sufficient for systems.

\begin{exe}\label{ex1}
Let $P = D_t\id_2 + i F(t)|D_x|$ where
\begin{equation*}
 F(t) =
\begin{pmatrix}
t^2 & t^3 \\ t^3 & t^4 
\end{pmatrix} \ge 0
\end{equation*}
Then we have
$F^{(3)}(0)  = 
\begin{pmatrix}
0 & 6 \\ 6 & 0 
\end{pmatrix}
$ which gives that
\begin{equation}\label{noninvcond}
 \bigcap_{j \le 3} \ker F^{(j)}(0) = \set{0}
\end{equation}
But 
\begin{equation*}
 F(t) = 
\begin{pmatrix}
 1 &  t \\ -t & 1
\end{pmatrix}
\begin{pmatrix}
t^2 & 0 \\ 0 & 0 
\end{pmatrix}
\begin{pmatrix}
 1 &  -t \\ t & 1
\end{pmatrix} 
\end{equation*}
so we find
$$
P = (1 + t^2)^{-1}
\begin{pmatrix}
 1 &  t \\ -t & 1
\end{pmatrix}
\begin{pmatrix}
D_t + i(t^2 + t^4)|D_x| & 0 \\ 0 & D_t 
\end{pmatrix}
\begin{pmatrix}
 1 &  -t \\ t & 1
\end{pmatrix} 
\qquad\text{modulo ${\Psi}^{0}$}
$$ 
which is not subelliptic near $\set{{\tau}= 0}$, since $D_t$ is not by Example~\ref{scalarex}.
\end{exe}

\begin{exe}\label{ex2}
Let $P =  h D_t\id_2 + i F(t)|D_x|$ where
\begin{equation*}
 F(t) =
\begin{pmatrix}
t^2 + t^8 & t^3 - t^7 \\ t^3 - t^7 & t^4 +t^6 
\end{pmatrix} 
 = 
\begin{pmatrix}
 1 &  t \\ -t & 1
\end{pmatrix}
\begin{pmatrix}
t^2 & 0 \\ 0 & t^6
\end{pmatrix}
\begin{pmatrix}
 1 &  -t \\ t & 1
\end{pmatrix}.
\end{equation*}
Then we have
\begin{equation*}
 P = (1 + t^2)^{-1}
\begin{pmatrix}
 1 &  t \\ -t & 1
\end{pmatrix}
\begin{pmatrix}
D_t + i(t^2+ t^4)|D_x| & 0 \\ 0 & D_t + i(t^6 + t^8)|D_x|
\end{pmatrix}
\begin{pmatrix}
 1 &  -t \\ t & 1
\end{pmatrix}
\end{equation*}
modulo ${\Psi}^{0}$, which is subelliptic near $\set{{\tau}= 0}$ with a loss
of $6/7$ derivatives  by Example~\ref{scalarex}.
This operator is, element for element, a
higher order perturbation of the operator of Example~\ref{ex1}.
\end{exe}

The problem is that condition~\eqref{noninvcond} in {\em not} invariant in the systems
case. Instead, we shall consider the following invariant generalization of~\eqref{scalarcond}.

\begin{defn} 
Let  $0 \le F(t)\in L^\infty_{loc}(\br)$ be an $N \times N$ system, then we define
\begin{equation} \label{omegadef}
{\Omega}_{\delta}(F) = \set{t: \min_{\mn u = 1} \w{F(t)u,u} \le
  {\delta}} \qquad {\delta} > 0
\end{equation}
which is well-defined almost everywhere and contains $|F|^{-1}(0)$.
\end{defn}

Observe that one may also use this definition in the scalar
case, then ${\Omega}_{\delta}(f) = f^{-1}([0,{\delta}])$ for
non-negative functions ~$f$.

\begin{rem}\label{subinvrem}
Observe that if $F \ge 0$ and $E$ is invertible then
we find that 
\begin{equation}\label{subinv}
{\Omega}_{\delta}(E^*FE)\subseteq {\Omega}_{C\delta}(F)  
\end{equation}
where $C = \mn {E^{-1}}^2$. 
\end{rem}

\begin{exe}
For the matrix $F(t)$ in Example~\ref{ex2} we find that
$|{\Omega}_{\delta}(F)| \le C {\delta}^{1/6}$ for $0 < {\delta} \le 1$, and
for the  matrix in Example~\ref{ex1} we find that
$|{\Omega}_{\delta}(F)| = \infty$, $\forall\, {\delta}$.
\end{exe}

We also have examples when the semidefinite imaginary part vanishes of infinite order.

\begin{exe}
Let 
$
 0 \le f(t,x) \le C e^{-1/|t|^{{\sigma}}}
$, ${\sigma} > 0$,
then we obtain that 
\begin{equation*}
 |{\Omega}_{\delta}(f_{x})|\le C_0 |\log {\delta}|^{-1/{\sigma}} \qquad
 \forall\, {\delta} >0 \quad \forall\, x
\end{equation*}
where $f_x(t) = f(t,x)$.
(We owe this example to Y. Morimoto.)
\end{exe}

We shall study systems where the imaginary part $F$ vanishes of finite
order, so that $ |{\Omega}_{\delta}(F)| \le C {\delta}^{{\mu}}$ for
${\mu} > 0$. In general, the largest exponent could be any ${\mu}> 0$, for
example when $F(t) = |t|^{1/{\mu}}\id_N$.
But for $C^\infty$ systems the best exponent is ${\mu} = 1/k$ for an
even ~$k$, by the following  result, which is Proposition~A.2  in
~\cite{de:pseudospec}.

\begin{rem}\label{subcondlem}
Assume that $0 \le F(t) \in C^\infty(\br)$ is an $N \times N$ system such that
$F(t) \ge c > 0$ when $|t| \gg 1$.
Then we find that 
\begin{equation*}
 |{\Omega}_{\delta}(F)| \le C {\delta}^{{\mu}} \qquad 0 <
 {\delta} \le 1
\end{equation*}
if and only if ${\mu} \le 1/k$ for an even $k \ge 0$ so that
\begin{equation}\label{dersubcond}
 \sum_{j \le k}|\partial_t^j\w{F(t)u(t), u(t)}|/\mn{u(t)}^2  > 0 \qquad
 \forall\,t
\end{equation}
for any $0 \ne u(t) \in C^\infty(\br)$.
\end{rem}

\begin{exe} 
For the scalar symbols ${\tau} + i f(t, x,{\xi})$ in
Example~\ref{scalarex} we find from Remark~\ref{subcondlem}
that \eqref{scalarcond}  
is equivalent to
\begin{equation*}
 |\set{t: f(t,x,{\xi}) \le {\delta}}| = |{\Omega}_{\delta}(f_{x,{\xi}})|\le C{\delta}^{1/k}\qquad
 0 < {\delta} \le 1  \qquad |{\xi}| = 1
\end{equation*}
where $f_{x,{\xi}}(t) = f(t,x,{\xi})$.
\end{exe}

The following example shows that for subelliptic type of estimates it is not sufficient to have
conditions only on the vanishing of the symbol, we also need conditions 
on the semibicharacteristics of the eigenvalues.

\begin{exe}\label{subex}
Let 
\begin{equation*}
 P =  D_t\id_2 +
{\alpha}  \begin{pmatrix}
  D_x & 0 \\ 0 & -D_x
\end{pmatrix}
+ i(t - {\beta} x)^2|D_x|\id_2 \qquad (t,x)\in \br^2
\end{equation*}
with ${\alpha}$, ${\beta} \in \br$, then we see from the scalar case in Example~\ref{scalarex}
that $P$ is subelliptic near $\set{{\tau}= 0}$ with a loss of $2/3$
derivatives if and only either ${\alpha} = 0$  
or ${\alpha} \ne 0$ and ${\beta} \ne \pm 1/{\alpha}$.
\end{exe}

\begin{defn}\label{apprdef}
Let  $Q \in C^\infty(T^*X)$ be an  $N \times N$ system and let $w_0
\in {\Sigma} \subset T^*X$, then $Q$
satisfies the {\em approximation property} on~${\Sigma}$ near~$w_0$ if 
there exists a $Q$ invariant $C^\infty$ subbundle $\Cal V$ of ~$\bc^N$
over $T^*X$ such that $\Cal V(w_0) = \ker Q^N(w_0)$ and
\begin{equation}\label{approxcond}
 \re\w{Q(w)v,v} = 0\qquad v \in \Cal V(w) \qquad w \in {\Sigma} 
\end{equation}
near $w_0$. That  $\Cal V$ is  $Q$ invariant means that $Q(w)v \in \Cal V(w)$ for
$ v \in \Cal V(w)$.  
\end{defn} 

Here $\ker Q^N(w_0)$ is  the
space of the generalized eigenvectors corresponding to the zero eigenvalue.
The symbol of the system in Example~\ref{subex} satisfies the approximation
property on ${\Sigma} = \set{{\tau} = 0}$ if and only if ${\alpha}
= 0$. 

Let $\wt Q = Q\restr{\Cal V}$ then $\im i \wt Q = \re \wt
Q= 0$ so Lemma~\ref{semiprop} gives that $\ran \wt Q \bot \ker \wt Q$ on
~${\Sigma}$. Thus $\ker \wt Q^N = 
\ker \wt Q$ on ~${\Sigma}$, and since $\ker \wt Q^N(w_0) = \Cal V(w_0)$ we find that
$ \ker Q^N(w_0) = \Cal V(w_0)= \ker Q(w_0)$.

\begin{rem} \label{condrem}
Assume that $Q$ satisfies the approximation property on~ the
$C^\infty$ hypersurface ${\Sigma}$
and is quasi-symmetric with 
respect to  $V  \notin T{\Sigma}$.
Then the limits of the non-trivial semibicharacteristics
of the eigenvalues of~$Q$ close to zero coincide with the bicharacteristics
of ~${\Sigma}$. 
\end{rem}

In fact, the approximation
property in Definition~\ref{apprdef} gives that $ \w{\re Qu,u
} = 0$ for $u\in \ker Q$ when ${\tau}= 0$. Since $\im
Q \ge 0$ we find that
\begin{equation}\label{tangvan}
 \w{dQu,u } = 0 \qquad \forall\, u
 \in \ker Q \qquad\text{on $T{\Sigma}$}
\end{equation}
By Remark~\ref{qshamrem} the limits of the  non-trivial
semibicharacteristics of the eigenvalues close to zero  of 
$Q$ are curves with tangents determined by
$\w{dQ u,u}$ for $u \in \ker Q$.
Since $V \re Q \ne 0$ on $\ker Q$ we find from~\eqref{tangvan}
that the limit curves coincide with the bicharacteristics
of ~${\Sigma}$, which are the flow-outs of the Hamilton vector field.

\begin{exe} 
Observe that Definition~\ref{apprdef} is empty if $\dim \ker Q^N(w_0) = 0$.
If $\dim \ker Q^N(w_0) > 0$, then there exists ${\varepsilon} > 0$  and a neigborhood
${\omega}$ to $w_0$ so that
\begin{equation}\label{spprojdef}
 {\Pi}(w) = \frac{1}{2{\pi}i}\int_{|z|= {\varepsilon}} (z\id_N - Q(w))^{-1}\,dz
 \in C^\infty({\omega})
\end{equation}
is the spectral projection on the (generalized) eigenvectors with
eigenvalues having absolute value less than~${\varepsilon}$. Then $\ran {\Pi}$ is a $Q$
invariant bundle over~${\omega}$ so that $\ran {\Pi}(w_0)  = \ker
Q^N(w_0)$. Condition~\eqref{approxcond} with $\Cal V = 
\ran {\Pi}$ means that ${\Pi}^*\re Q{\Pi} \equiv 0$ in
~${\omega}$. When $\im Q(w_0) 
\ge 0 $ we find that ${\Pi}^*Q{\Pi}(w_0) = 0$, then ~$Q$ satisfies the
approximation property on ~${\Sigma}$ near ~$w_0$
with $\Cal V = \ran {\Pi}$ if and only if 
$$
d({\Pi}^*(\re Q){\Pi})\restr{T{\Sigma}} \equiv 0 \qquad \text{ near ~$w_0$}
$$
\end{exe}

\begin{exe}\label{normformex}
If $Q$ satisfies the approximation property on ${\Sigma}$, then by choosing an
orthonormal basis for $\Cal V$ and extending it to an orthonormal basis for
~$\bc^N$ we obtain the system on the form
\begin{equation*}\label{normformex0}
 Q = 
\begin{pmatrix}
Q_{11} & Q_{12} \\ 0 & Q_{22} 
\end{pmatrix}
\end{equation*}
where $Q_{11}$ is $K \times K$ system such that $Q^{N}_{11}(w_0) = 0$,
$\re Q_{11} = 0$ on ${\Sigma}$ and $|Q_{22}| \ne 0$.
By multiplying from left with
\begin{equation*}
 \begin{pmatrix}
\id_K & -Q_{12}Q_{22}^{-1} \\ 0 & \id_{N-K} 
\end{pmatrix}
\end{equation*}
we obtain that $Q_{12} \equiv 0$ without changing $Q_{11}$ or $Q_{22}$. 
\end{exe}

In fact, the eigenvalues of ~$Q$ are then eigenvalues of either
~$Q_{11}$ or ~$Q_{22}$. Since ~$\Cal V(w_0)$ are the (generalized)
eigenvectors corresponding to the zero eigenvalue of~$Q(w_0)$ we find
that all eigenvalues of ~ $Q_{22}(w_0)$ are non-vanishing, thus
$Q_{22}$ is invertible near~$w_0$,

\begin{rem}
If $Q$ satisfies the approximation property on ${\Sigma}$ near ~~$w_0$,
then it satisfies the approximation property on ~ ${\Sigma}$ near~$w_1$,
for $w_1$ sufficiently close to ~$w_0$.  
\end{rem}

In fact, let $Q_{11}$ be the restriction of $Q$ to $\Cal V$ as in Example~~\ref{normformex},
then since $\re Q_{11} = \im i Q_{11} = 0$ on ${\Sigma}$ we find from Lemma~\ref{semiprop} that
 $\ran Q_{11} \bot \ker Q_{11}$ and
$\ker Q_{11} = \ker Q_{11}^N$ on
${\Sigma}$. Since $Q_{22}$ is invertible in ~\eqref{normformex0}, we
find that $\ker Q \subseteq \Cal V$. Thus, by using the spectral
projection ~\eqref{spprojdef} of $Q_{11}$ near ~$w_1 \in {\Sigma}$
for small enough~${\varepsilon}$ we obtain an $Q$ invariant subbundle
$\wt {\Cal V} \subseteq \Cal V$ so that $\wt {\Cal V}(w_1) = \ker Q_{11}(w_1)
= \ker Q^N(w_1)$.

If  $Q \in C^\infty$ satisfies the approximation property and $Q_E =
E^*QE$ with invertible $E\in C^\infty$, then it follows from the proof of 
Proposition~\ref{fininv} below that there exist invertible $A$, $B \in C^\infty$
so that $AQ_E$ and  $Q^*B$   satisfy the approximation property.

\begin{defn}\label{subdef}
Let $P(w) \in C^\infty (T^*X) $ be an $N \times N$ system and ${\mu}
\in \br_+ $. Then ~$P$ is 
of {\em  finite type~ ${\mu}$} at~ $w_0 \in T^*X$ 
if there exists a neighborhood
${\omega}$ of $w_0$, a $C^\infty$ hypersurface~${\Sigma}\ni w_0$, a real
$C^\infty$ vector field $V \notin T{\Sigma}$
and an invertible symmetrizer ~$M \in C^\infty$ so that 
$Q = MP$ is quasi-symmetric with respect to~$V$ in ${\omega}$ and
satisfies the approximation property on ${\Sigma}\bigcap {\omega}$. Also,  
for every bicharacteristic ~${\gamma}$ of ~${\Sigma}$ 
the arc length
\begin{equation}\label{subcond}
\big| {\gamma}\cap {\Omega}_{\delta}(\im Q) \cap
{\omega}\big| \le C{\delta}^{\mu}\qquad 0 < {\delta} \le 1
\end{equation}
The operator $P \in {\Psi}^m_{cl}$ is of finite
type  ${\mu}$ at~ $w_0$ if the principal symbol ~${\sigma}(P)$ is
of finite type when ~$|{\xi}| = 1$.
\end{defn}

Recall that the bicharacteristics of a hypersurface in $T^*X$ are the
flow-outs of the Hamilton vector field of~${\Sigma}$.
Of course, if $P$ is elliptic then it is trivially of
finite type $0$, just choose $M = iP^{-1}$ to obtain $Q = i\id_N$.
If $P$ is of finite type, then it is quasi-symmetrizable by
definition and thus of principal type.

\begin{rem}\label{nfrem}
Observe that since  $0 \le \im Q \in C^\infty$ we obtain from
Remark~\ref{subcondlem} that the largest exponent in ~\eqref{subcond} is
${\mu} = 1/k$ for an even ~$k \ge 0$. Also, we may assume that 
\begin{equation}\label{XQC}
 \im\w{Qu,u} \ge c\mn{Qu}^2 \qquad \forall \, u \in \bc^N
\end{equation}
\end{rem}

In fact, by adding $i{\varrho}P^*$ to $M$ we obtain~\eqref{XQC} for
large enough~${\varrho}$ by~\eqref{immp}, and this does not change $\re Q$.

\begin{exe}\label{Qex}
Assume that $Q$ is quasi-symmetric with respect to the real vector
field $V$, satisfying~\eqref{subcond} and
the approximation property on ${\Sigma}$. Then by choosing an
orthonormal basis and changing the symmetrizer as in
Example~\ref{normformex} we obtain the system on the form 
\begin{equation*}
 Q = 
\begin{pmatrix}
Q_{11} & 0 \\ 0 & Q_{22} 
\end{pmatrix}
\end{equation*}
where $Q_{11}$ is $K \times K$ system such that $Q^{N}_{11}(w_0) = 0$,
$\re Q_{11} = 0$ on ${\Sigma}$ and $|Q_{22}| \ne 0$. Since $Q$ is
quasi-symmetric with respect to ~$V$ we also obtain that  $Q_{11}(w_0) = 0$, $\re VQ_{11}
> 0$, $\im Q \ge 0$ and $Q$ satisfies~\eqref{subcond}. In fact, then we find from
Lemma~\ref{semiprop} that $\im Q \bot \ker Q$ which gives $\ker Q^N =
\ker Q$. Note that ${\Omega}_{\delta}(\im Q_{11}) \subseteq
{\Omega}_{\delta}(\im Q)$, so
$Q_{11}$ satisfies~\eqref{subcond}. 
\end{exe}

\begin{exe} \label{scalsubex} 
In the scalar case, we find from Example~\ref{scalarcase} that $p\in C^\infty(T^*X)$ is
quasi-symmetrizable with respect to $H_t = \partial_{\tau}$ if and only if  
\begin{equation}\label{pex}
 p(t,x;{\tau},{\xi}) = q(t,x;{\tau},{\xi})({\tau} + i
 f(t,x,{\xi})) 
\end{equation}
with $f \ge 0$ and $q \ne 0$. If $f(t,x,{\xi}) \ge c > 0$ when $|(t,x,{\xi})| \gg
1$ we find by taking $q^{-1}$ as symmetrizer 
that $p$ is of finite type ${\mu}$ if and only
if ${\mu} = 1/k$ for an even ~$k$ such that
\begin{equation*}
 \sum_{j \le k}|\partial_{t}^kf(t,x,{\xi}) | > 0 \qquad \forall\, x \
{\xi} 
\end{equation*}
by  Remark~\ref{subcondlem}. In fact, the approximation property
on ${\Sigma} = \set{{\tau} = 0}$ is trivial since $f$ is real.
\end{exe}

\begin{prop}\label{fininv}
If $P(w)\in C^\infty(T^*X)$ is of finite type ${\mu}$ at~$w$ then $P^*$ is of finite
type ${\mu}$ at~$w$. If $A(w)$ and $B(w)\in C^\infty(T^*X)$ are invertible, then
$APB$ is of finite type ${\mu}$ at~$w$.
\end{prop}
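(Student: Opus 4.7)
The plan is to verify the three conditions of Definition \ref{subdef}---quasi-symmetricity, the approximation property, and the arclength bound on $\Omega_\delta(\im Q)$---for both $APB$ and $P^*$, starting from the canonical symmetrizer transformations of Proposition \ref{invprop}. For $APB$, let $V$, $\Sigma$, $M$ realize $P$ as finite type with $Q = MP$; the natural symmetrizer for $APB$ is $M' = B^*MA^{-1}$, yielding the symmetrized form $Q' = B^*QB$, which is quasi-symmetric by Proposition \ref{invrem0}. The arclength bound transfers at once, since $\im Q' = B^*(\im Q)B$ is a congruence of $\im Q \ge 0$ by the invertible $B$, and Remark \ref{subinvrem} gives $\Omega_\delta(\im Q') \subseteq \Omega_{C\delta}(\im Q)$ with $C = \|B^{-1}\|^2$ locally bounded, so the bound persists with the same exponent $\mu$ on each bicharacteristic of $\Sigma$.

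The heart of the proof is the approximation property for $Q'$. Using Example \ref{normformex} in an orthonormal basis adapted to $\Cal V$, one may assume $Q$ is block diagonal, with upper block $Q_{11}$ nilpotent at $w_0$ and satisfying $\re Q_{11}|_\Sigma = 0$, and with invertible lower block $Q_{22}$. Since $\im Q' \ge 0$, Lemma \ref{semiprop} applied to $Q'$ gives $\ker Q'(w_0) = \ker (Q')^N(w_0) = B(w_0)^{-1}\Cal V(w_0)$ with no Jordan blocks at the zero eigenvalue; hence the spectral projection of $Q'$ onto the eigenvalue cluster near $0$ produces a smooth $Q'$-invariant subbundle $\Cal V'$ of rank $K$ with $\Cal V'(w_0) = B(w_0)^{-1}\Cal V(w_0)$. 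In an orthonormal basis adapted to $\Cal V'$, the matrix $Q'$ is block upper triangular, and left multiplication by the block upper-triangular matrix from Example \ref{normformex} kills the $(1,2)$-block, producing $Q'' = A'Q'$ in block diagonal form; passing from $M'$ to $A'M'$ as the symmetrizer, $Q''$ is the candidate symmetrized form of $APB$, and its upper block $\tilde Q_{11}$ inherits $\re\tilde Q_{11}|_\Sigma = 0$ from the approximation property of $Q$ via the identity $\re\langle Q'\cdot B^{-1}u,B^{-1}u\rangle = \re\langle Qu,u\rangle = 0$ for $u\in\Cal V$ on $\Sigma$, together with the coincidence of $\Cal V'$ and $B^{-1}\Cal V$ at $w_0$.

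The main technical hurdle is then to confirm that $Q''$ remains quasi-symmetric with respect to some real vector field $V' \notin T\Sigma$, i.e., that it satisfies \eqref{qs1b}--\eqref{qs2b}. This follows from the freedom in the choice of $V'$ and the preservation of principal type under multiplication by invertibles (Remarks \ref{princrem} and \ref{princlem}), which guarantee the positivity needed on $\ker Q'' = \Cal V'$. The $P^*$ case proceeds in parallel: the symmetrizer $-M^{-1}$ yields $\tilde Q = -M^{-1}Q^*M^{-*} = E^*(-Q^*)E$ with $E = M^{-*}$, the matrix $-Q^*$ is block diagonal in the same basis as $Q$ with $\re(-Q^*)|_\Sigma = 0$ on $\Cal V$, and the spectral-projection and block-triangular reduction above applies verbatim to $E^*(-Q^*)E$ to produce a symmetrizer for $P^*$ satisfying all three conditions.
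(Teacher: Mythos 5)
Your reductions for quasi-symmetry and for the measure condition are fine: with $M' = B^*MA^{-1}$ you get $Q' = M'(APB) = B^*QB$, which is quasi-symmetric by Proposition~\ref{invrem0}, and Remark~\ref{subinvrem} transfers \eqref{subcond}. The gap sits exactly at the step you call the heart of the proof: the approximation property for $Q'$. The identity $\re\w{Q'B^{-1}u,B^{-1}u} = \re\w{Qu,u} = 0$ gives vanishing of the real part on the bundle $B^{-1}\Cal V$, but $B^{-1}\Cal V$ is in general \emph{not} $Q'$-invariant, since a congruence, unlike a similarity, does not transport invariant subbundles: $Q'(B^{-1}v) = B^*Qv \in B^*\Cal V$. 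The $Q'$-invariant bundle you actually construct, the spectral bundle $\Cal V'$ of $Q'$, coincides with $B^{-1}\Cal V$ only at the single point $w_0$, so the required identity $\re\w{Q'v,v} = 0$ for $v \in \Cal V'(w)$, $w \in {\Sigma}$ near $w_0$, does not follow, and the claim that the block $\wt Q_{11}$ of $Q'' = A'Q'$ satisfies $\re \wt Q_{11} = 0$ on ${\Sigma}$ is unsubstantiated. (The paper itself flags this point: just before Definition~\ref{subdef} it is observed that for $Q_E = E^*QE$ one only gets that $AQ_E$ satisfies the approximation property for a suitable invertible $A$, not $Q_E$ itself.) Your final appeal to ``freedom in the choice of $V'$'' and principal-type invariance to obtain \eqref{qs1b}--\eqref{qs2b} for $Q''$ is also not a proof: quasi-symmetry is a quantitative positivity condition that principal type does not imply, and left multiplication by the triangular correction $A'$ does not obviously preserve $\im \ge 0$ outside the special block-diagonal situation of Example~\ref{Qex}.

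The paper resolves precisely this tension by separating the two operations. It first normalizes $Q$ to block-diagonal form as in Example~\ref{Qex}, with $Q$-invariant coordinate bundles $\Cal V_1 \oplus \Cal V_2 = \bc^N$; it then takes $\wt M = B^{-1}MA^{-1}$, so that $\wt Q = B^{-1}QB$ is a \emph{similarity}, which does transport the invariant splitting to $B^{-1}\Cal V_1 \oplus B^{-1}\Cal V_2$ and keeps $\wt Q$ block diagonal with blocks $B_j^{-1}Q_{jj}B_j$; finally it multiplies on the left by the block-diagonal positive factor $\Cal B = \operatorname{diag}(B_1^*B_1, B_2^*B_2)$ adapted to this splitting, turning each block into the congruence $B_j^*Q_{jj}B_j$, to which Proposition~\ref{invrem0} and Remark~\ref{subinvrem} apply; this preserves the invariant splitting, $\im \ge 0$, $\re \ol Q_{11} = 0$ on ${\Sigma}$, $V\re \ol Q_{11} > 0$ and \eqref{subcond} simultaneously. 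For $P^*$ the paper avoids the congruence issue altogether: $-Q^* = -P^*M^*$ is already of finite type with symmetrizer $\id_N$ with respect to $-V$, and $P^* = (-\id_N)(-P^*M^*)(M^*)^{-1}$ is then handled by the first part; your route for $P^*$ runs through the same unproved congruence step. To repair your argument you would essentially have to reinstate the paper's similarity-plus-block-diagonal-factor device.
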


\begin{proof}
Let $M$ be the symmetrizer in Definition~\ref{subdef} so that $Q = MP$
is quasi-symmetric with respect to $V$. By choosing a suitable basis 
and changing the symmetrizer as
in Example~\ref{Qex}, we may write
\begin{equation}\label{specformorig}
 Q = 
\begin{pmatrix}
Q_{11} & 0 \\ 0 & Q_{22} 
\end{pmatrix}
\end{equation}
where $Q_{11}$ is $K \times K$ system such that $Q_{11}(w_0) = 0$,  $V
\re Q_{11} > 0$, $\re Q_{11} = 0$ on
${\Sigma}$ and $Q_{22}$ is invertible. We also have $\im Q \ge
0$ and $Q$ satisfies~\eqref{subcond}. 
Let $\Cal V_1 = \set{u\in \bc^N:\ u_j = 0 \text{ for $j > K$}}$ and $\Cal V_2 =
\set{u\in \bc^N:\ u_j = 0 \text{ for $j \le K$}}$, these are $Q$
invariant bundles such that $ \Cal V_1 \oplus \Cal V_2 = \bc^N$.

First we are going to show that $\wt P =APB$ is of finite type. 
By taking $\wt M = B^{-1}MA^{-1}$ we find that 
\begin{equation}\label{specform}
\wt M \wt P = \wt Q  = B^{-1}QB
\end{equation}
and it is clear that $B^{-1}\Cal V_j$ are $\wt Q$ invariant
bundles, $j = 1$, $2$. By choosing bases in  $B^{-1}\Cal V_j$ for
$j = 1$, $2$, we obtain a basis for $\bc^N$ in which $\wt Q$ has a block
form:
\begin{equation}\label{qblock}
 \wt Q = 
\begin{pmatrix}
\wt Q_{11} & 0 \\ 0 & \wt Q_{22}  
\end{pmatrix}
\end{equation}
Here $\wt Q_{jj}: B^{-1}\Cal V_j \mapsto B^{-1}\Cal V_j$, is given by
$\wt Q_{jj} = B_{j}^{-1} Q_{jj}B_{j}$ with 
$$B_{j}: B^{-1} \Cal V_j\ni u \mapsto Bu \in \Cal V_j \qquad j= 1,\ 2$$ 
By multiplying $\wt Q$ from the left with 
\begin{equation*}
\Cal B =
\begin{pmatrix}
 B_{1}^*B_{1} & 0 \\ 0 &  B_{2}^*B_{2}
\end{pmatrix}
\end{equation*}
we obtain that 
\begin{equation*}
\ol Q = \Cal B \wt Q = \Cal B \wt M \wt P =
\begin{pmatrix}
 B_{1}^*Q_{11}B_{1} & 0 \\ 0 &  B_{2}^*Q_{22}B_{2}
\end{pmatrix}
=
\begin{pmatrix}
\ol Q_{11} & 0 \\ 0 & \ol Q_{22}  
\end{pmatrix}
\end{equation*}
It is clear that $\im \ol Q \ge 0$, $Q_{11}(w_0) = 0$, $\re \ol Q_{11}
= 0$ on ${\Sigma}$, $|\ol Q_{22}| \ne 0$
and $V \re \ol Q_{11} > 0$ by Proposition~\ref{invrem0}. Finally, we
obtain from Remark~\ref{subinvrem} that 
\begin{equation}\label{subinv0}
{\Omega}_{\delta}(\im \ol Q) \subseteq {\Omega}_{C\delta}(\im Q)
\end{equation}
for some $C > 0$, which proves that $\wt P = APB$ is of finite type.
Observe that $\ol Q = AQ_B$, where $Q_B = B^*QB$ and $A = \Cal B B^{-1}(B^*)^{-1}$.

To show that $P^*$ also is of finite type, we may assume as before that
$Q= MP$ is on the form~\eqref{specformorig} with $Q_{11}(w_0) = 0$,  $
V \re Q_{11} > 0$, $\re Q_{11} = 0$ on
${\Sigma}$, $Q_{22}$ is invertible, $\im Q \ge 0$ and  $Q$ satisfies~\eqref{subcond}.
Then we find that
\begin{equation*}
 -P^*M^* = -Q^* = 
\begin{pmatrix}
 - Q_{11}^* & 0 \\ 0 & - Q_{22}^*   
\end{pmatrix}
\end{equation*}
satisfies the same conditions with respect to $-V$, so it is of finite
type with multiplier~ $\id_N$. By the first part of the proof we 
obtain that $P^*$ is of finite type, which finishes the proof.
\end{proof}

\begin{thm}\label{subthm}
Assume that  $P \in {\Psi}^m_{cl}(X)$ is an $N \times N$ system of
finite type ${\mu} > 0$ near~$w_0 \in T^*X\setminus 0$, then $P$ is subelliptic at ~$w_0$
with a loss of $1/{\mu}+1$ derivatives:
\begin{equation}\label{subprop}
 Pu\in H_{(s)} \text{ at $w_0$ } \implies u
 \in H_{(s+m- 1/{\mu}+1)}  \text{ at $w_0$ }
\end{equation}
for $ u \in \Cal D'(X, \bc^N)$.
\end{thm}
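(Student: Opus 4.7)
The plan is to reduce $P$ to a convenient normal form by means of the invariance result (Proposition~\ref{fininv}), and then invoke the subelliptic estimates of \cite{de:pseudospec} to derive a microlocal a priori inequality from which \eqref{subprop} follows by standard arguments. By Remark~\ref{nfrem} we may assume ${\mu} = 1/k$ for an even $k \ge 0$, so the asserted loss is $k/(k+1)$.

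First, by Proposition~\ref{fininv}, both left multiplication by an invertible elliptic factor and conjugation with invertible elliptic pseudodifferential operators preserve the finite-type property. I would use this freedom, combined with the construction in Example~\ref{Qex}, to pass from $P$ to $Q = MP$ which, after a suitable choice of basis, is block diagonal:
\begin{equation*}
 Q =
\begin{pmatrix}
Q_{11} & 0 \\ 0 & Q_{22}
\end{pmatrix}
\end{equation*}
with $Q_{22}$ elliptic near $w_0$ (contributing only elliptic regularity, no loss), and with $Q_{11}$ a $K\times K$ quasi-symmetric system satisfying $Q_{11}(w_0)=0$, $V\re Q_{11}>0$ on $\ker Q_{11}$, $\re Q_{11}\equiv 0$ on ${\Sigma}$, $\im Q_{11}\ge 0$, and the vanishing bound \eqref{subcond}. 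After adding a suitable multiple of $iQ_{11}^*Q_{11}$ to the symmetrizer, Remark~\ref{nfrem} gives \eqref{XQC}, so we may in addition assume $\im Q_{11} \ge c Q_{11}^*Q_{11}$.

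Next, I would straighten ${\Sigma}$ and $V$: since ${\Sigma}$ is a $C^\infty$ hypersurface with $V \notin T{\Sigma}$, I pick homogeneous symplectic coordinates $(t,x;{\tau},{\xi})$ near $w_0$ so that ${\Sigma}=\{{\tau}=0\}$ and $V=\partial_{\tau}$ at $w_0$; the bicharacteristics of ${\Sigma}$ then become the lines $t\mapsto(t,x_0;0,{\xi}_0)$. By Egorov's theorem conjugation by an elliptic Fourier integral operator corresponding to this coordinate change preserves the hypotheses. Taylor expansion of $Q_{11}$ in ${\tau}$, together with $\re Q_{11}\equiv0$ on ${\Sigma}$ and $\partial_{\tau}\re Q_{11}>0$, lets me write the principal symbol of $Q_{11}$ as ${\tau}A(t,x,{\xi})+iF(t,x,{\xi})$ modulo lower order, with $A=A^*>0$ and $F\ge 0$, and after multiplying by $A^{-1/2}$ on both sides (again invoking Proposition~\ref{fininv}) reduce to $A=\id_K$. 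The finite-type condition \eqref{subcond} along bicharacteristics of ${\Sigma}$ translates into
\begin{equation*}
 \big|\{t:\ \min_{\|u\|=1}\w{F(t,x,{\xi})u,u}\le {\delta}\}\big|\le C{\delta}^{1/k}\qquad 0<{\delta}\le 1,
\end{equation*}
uniformly in $(x,{\xi})$ near $(x_0,{\xi}_0)$ with $|{\xi}|=1$.

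The heart of the matter is then the matrix-valued subelliptic estimate for operators of the form $D_t\id_K+iF(t,x,D_x)$ with $0\le F\in C^\infty$ satisfying the above bound, which is exactly the content of the estimates of \cite{de:pseudospec}. Applying that estimate microlocally near $w_0$ yields
\begin{equation*}
 \|{\phi}(x,D)u\|_{(k/(k+1))}\le C\bigl(\|Pu\|_{(0)}+\|u\|_{(-N)}\bigr),\qquad u\in C^\infty_0,
\end{equation*}
for any ${\phi}\in S^0_{1,0}$ supported in a small conic neighborhood of $w_0$ where ${\phi}=1$ near $w_0$; combined with elliptic regularity for the $Q_{22}$ block, this gives the desired microlocal a priori inequality with a loss of $1/({\mu}+1)=k/(k+1)$ derivatives. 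The conclusion \eqref{subprop} is then obtained by the usual microlocalization, regularization, and bootstrap on $s$.

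The main obstacle will be the matrix-valued subelliptic estimate itself, which requires a delicate Weyl--H\"ormander calculus built on a metric adapted to the scale ${\delta}^{1/k}$ along ${\gamma}$, and a matrix version of the Fefferman--Phong/sharp G\aa rding inequality applied to $F$ via Remark~\ref{subcondlem}. The approximation property is essential here: by Remark~\ref{condrem}, it forces the limit semibicharacteristics of the eigenvalues to coincide with the bicharacteristics of ${\Sigma}$, which is precisely what allows the one-dimensional condition \eqref{subcond} along bicharacteristics of ${\Sigma}$ to control the full symbol. Since this technical core is developed in \cite{de:pseudospec}, the work in this proof is principally the reduction above and the verification that the hypotheses of the cited estimate are met.
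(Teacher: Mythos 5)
Your overall strategy (block-diagonalize via Example~\ref{Qex}, straighten $\Sigma$ and $V$, then quote the estimates of \cite{de:pseudospec}) parallels the paper up to a point, but your central step contains a genuine gap. You reduce $Q_{11}$ to the model $D_t\id_K + iF(t,x,D_x)$ with $F = F^*\ge 0$ \emph{independent of} $\tau$, and assert that the estimates of \cite{de:pseudospec} are stated for exactly this model. Neither half is right. Taylor expansion in $\tau$ together with $\re Q_{11}=0$ on $\{\tau=0\}$ only gives $Q_{11} = \tau B(t,x,\tau,\xi) + iF_0(t,x,\xi)$ with $B$ non-Hermitian and $\tau$-dependent; writing this as $\tau A + iG$ with $A=A^*$ leaves $G = F_0 + \tau \im B$ still depending on $\tau$, and multiplying by $A^{-1/2}$ on both sides does not remove that dependence. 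Producing your model form would require a matrix analogue of the Malgrange preparation used in the scalar case (Example~\ref{scalarcase}, \cite{dsz}), which does not exist in general; avoiding precisely such normal forms is the point of the quasi-symmetrizable framework. Note also that \eqref{sub1} only gives $\partial_\tau \re Q \ge c - C\im Q$, so the outright positivity $A=A^*>0$ you assert holds only where the imaginary part is small (harmless very near $w_0$, but not as stated). The estimate you want to cite, Proposition~\ref{qestprop} (Proposition~6.1 of \cite{de:pseudospec}), is in fact formulated for general systems satisfying \eqref{sub1}--\eqref{sub4} with $V=\partial_\tau$, so the reduction to $D_t+iF$ is neither available nor needed.

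Second, what you compress into ``applying that estimate microlocally'' is where the paper does the remaining real work: Proposition~\ref{qestprop} is a \emph{semiclassical} estimate, $h^{1/(\mu+1)}\mn u \le C(\mn{Q(t,x,hD_{t,x})u} + \mn{R^w u} + h\mn u)$, and it must be converted into the inhomogeneous microlocal bound $\mn u_{(-1/(\mu+1))} \le C(\mn{Q(t,x,D_{t,x})u} + \mn u_{(-1)})$ by a Littlewood--Paley decomposition: apply the semiclassical estimate on each dyadic block with $h_j = 2^{-j}$, control the commutators with the cutoffs, sum in $\ell^2$, then return to $P$ via $Q = MP$ modulo ${\Psi}^{-1}$ and bootstrap from $u \in H_{(-K)}$. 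Your proposal omits this passage, and your displayed a priori inequality also carries the wrong index: for an operator of order $1$ with loss $k/(k+1)$ the gain is $1/(k+1)$, so the left-hand side should be $\mn{{\phi}(x,D)u}_{(1/(k+1))}$, not $\mn{{\phi}(x,D)u}_{(k/(k+1))}$. The repair is to drop the normal-form reduction altogether: keep $Q=MP$ quasi-symmetric satisfying \eqref{sub1}--\eqref{sub4} as in the paper, apply Proposition~\ref{qestprop} directly, and supply the dyadic summation argument.
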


Observe that the largest exponent is ${\mu} = 1/k$ for an even ~$k$ by
Remark~\ref{nfrem}, and then $1/{\mu}+1 = k/k+1$. Thus Theorem~\ref{subthm} generalizes
Proposition~27.3.1 in ~\cite{ho:yellow} by Example~\ref{scalsubex}.

\begin{exe}\label{simplex}
Let  
\begin{equation*}
 P(t,x;{\tau},{\xi}) = {\tau}M(t,x,{\xi})+ iF(t,x,{\xi}) \in
 S^1_{cl}
\end{equation*}
where  $M \ge c_0  > 0$ and $F \ge 0$ satisfies 
\begin{equation}\label{subcond0}
\left|\set{t:\ \inf_{|u| = 1}\w{F(t,x,{\xi})u,u} \le {\delta}} \right| \le C
{\delta}^{\mu} \qquad |{\xi}|=1
\end{equation}
for some ${\mu} > 0$. 
Then $P$ is quasi-symmetrizable with respect to $\partial
_{{\tau}}$ with symmetrizer $\id_N$. When ${\tau} =
  0$ we obtain that $\re P = 0$, so by taking $\Cal V = \ran {\Pi}$
  for the spectral projection
  ${\Pi}$ given by ~\eqref{spprojdef} for $F$, we find that
$P$ satisfies the approximation
  property with respect to ${\Sigma}= \set{{\tau} = 0}$. 
Since ${\Omega}_{\delta}(\im P) =
{\Omega}_{\delta}(F)$ we find from ~\eqref{subcond0}
that $P$ is of finite type ~${\mu}$.
Observe that if $F(t,x,{\xi}) \ge c > 0$ when $|(t,x,{\xi})| \gg 1$ we find from
Remark~\ref{subcondlem} that ~\eqref{subcond0} is satisfied if
and only if ${\mu} \le 1/k$ for an even $k \ge 0$ so that 
\begin{equation*}
 \sum_{j \le k}|\partial_t^j\w{F(t,x,{\xi})u(t), u(t)}| > 0 \qquad
 \forall\,t, x, {\xi}
\end{equation*}
for any $0 \ne u(t) \in C^\infty(\br)$. Theorem~\ref{subthm}
gives that $P(t,x,D_t,D_x)$ is subelliptic near~$\set{{\tau}=0}$ with a loss of $k/k+1$
derivatives. 
\end{exe}

\begin{proof}[Proof of Theorem~\ref{subthm}] 
First, we may reduce to the case $m = s = 0$ by replacing $u$ and~$P$ by $\w{D}^{s+m}u$
and $\w{D}^{s}P\w{D}^{-s-m} \in {\Psi}^0_{cl}$. Now $u \in H_{(-K)}$ for
some ~$K$ near~$w_0$, and 
it is no restriction to assume $K = 1$. In fact, if $K > 1$ then by using that $Pu \in
H_{(1-K)}$ near~$w_0$, we obtain that $u \in H_{(-K + {\mu}/{\mu}+1)}$  near~$w_0$
and we may iterate this argument until  $u \in H_{(-1)}$   near~$w_0$.
By cutting off with ${\phi}\in S^0_{1,0}$ we may assume that $v = {\phi}(x, D)u \in
H_{(-1)}$ and $Pv = [P, {\phi}(x, D)]u + {\phi}(x, D)Pu \in  H_{(0)}$
since $[P, {\phi}(x, D)] \in {\Psi}^{-1}$. If ${\phi} \ne 0$ in a conical
neighborhood of ~~$w_0$ it suffices to prove that $v \in  H_{(-1/{\mu}+1)}$.

By Definition~\ref{subdef} and Remark~\ref{nfrem} there exist
a $C^\infty$ hypersurface ${\Sigma}$, a real $C^\infty$  vector field
$V \notin T{\Sigma}$, an
invertible symmetrizer $M \in C^\infty$ so that
$Q = MP$ satisfies~\eqref{subcond}, the approximation property
on~${\Sigma}$, and
\begin{align}
&V\re Q  \ge c -  C \im Q \qquad c > 0 \label{sub1}\\
&\im Q \ge c\, Q^*Q \label{sub2}
\end{align}
in a neighborhood ${\omega}$ of $w_0$. By extending by
homogeneity, we can assume that $V$, $M$ and ~$Q$ are homogeneous of degree~$0$.

Since ~\eqref{sub1} is stable under small perturbations in $V$ we can
replace $V$ with $H_t$ for some real $t \in C^\infty$.
By solving the initial value problem $H_t {\tau} \equiv -1$,
${\tau}\restr{\Sigma} = 0$, and completing to a symplectic  $C^\infty$
coordinate system $(t,{\tau},x,{\xi})$,
we obtain that $\Sigma = \set{\tau = 0}$ in a
neighborhood of ~$w_0 = (0,0,x_0,{\xi}_0)$, ${\xi}_0 \ne 0$.
We obtain from Definition~\ref{apprdef} that 
\begin{equation} \label{sub3}
\re \w{Qu,u}  = 0
\qquad \text{when $u \in \Cal V$ and ${\tau}=0$}
\end{equation}
near $w_0$. Here $\Cal V$ is a $Q$ invariant $C^\infty$ subbundle of ~$\bc^N$
such that $\Cal V(w_0) = \ker Q^N(w_0)  = \ker Q(w_0)$ by
Lemma~\ref{semiprop}. By condition~\eqref{subcond} we have that
\begin{equation}\label{sub4}
\big| {\Omega}_{\delta}(\im Q_{x,{\xi}}) \cap
\set{|t| < c}\big| \le C {\delta}^{\mu} 
\end{equation}
when $ |(x,{\xi})- (x_0,{\xi}_0)| < c$, here $Q_{x,{\xi}}(t) = Q(t,0,x,{\xi})$.

Next, we shall localize the estimate.
Choose $\set{{\varphi}_j}_j \in S^0_{1,0}$
and  $\set{{\psi}_j}_j \in S^0_{1,0}$ with values in $\ell^2$, such
that ${\varphi}_j \ge 0$, ${\psi}_j \ge 0$, $\sum_{j} {\varphi}_j^2 =
1$, ${\psi}_j{\varphi}_j \equiv {\varphi}_j$ and ${\psi}_j$ is
supported where $|({\tau},{\xi})| \cong 2^j$. Since these are Fourier
multipliers we find that
$
 \sum_{j}^{}{\varphi}_j(D_{t,x})^2 = 1 
$
and 
\begin{equation*}
 \mn u^{2}_{(s)} \cong \sum_{j} 2^{2sj}\mn{{\varphi}_j(D_{t,x})u}^2 \qquad u
 \in \Cal S
\end{equation*}
Let $Q_j = {\psi}_j Q$ be the
localized symbol, and let $h_j = 2^{-j} \le 1$. Since $Q_j \in
S^0_{1,0}$ is supported where $|({\tau},{\xi})| \cong 2^j$, we find that
$Q_j(t,x,{\tau},{\xi}) = \wt Q_j(t,x,h_j{\tau},h_j{\xi})$ where $\wt Q_j \in
C^\infty_0(T^*\br^n)$ uniformly.
We shall obtain Theorem~\ref{subthm} from the following result, which
is Proposition~6.1 in \cite{de:pseudospec}.

\begin{prop}\label{qestprop}
Assume that $Q \in C_\rb^\infty(T^*\br^n)$ is an $N \times N$ system satisfying
~\eqref{sub1}--\eqref{sub4} in a neighborhood of\/ ~$w_0 =
(0,0,x_0,{\xi}_0)$ with $V = \partial_{\tau}$ and
${\mu} > 0$. Then there exists~$h_0 > 0$ 
and $R \in  C_\rb^\infty(T^*\br^n)$ so that $w_0 \notin \supp R$ and
\begin{equation}\label{locsubest}
  h^{1/{\mu}+1} \mn {u} \le C (\mn{Q(t,x,hD_{t,x}) u} +
  \mn{R^w(t,x,hD_{t,x})u} + h\mn u) \qquad 0 < h \le h_0 
\end{equation}
for any $u \in C^\infty_0(\br^n, \bc^N)$.
\end{prop}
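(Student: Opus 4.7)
The plan is to reduce Proposition~\ref{qestprop} to a one-dimensional semiclassical estimate along bicharacteristics of $\Sigma$, obtained by a multiplier argument tailored to the measure condition~\eqref{sub4}.

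First, using the approximation property together with Example~\ref{normformex}, I would multiply $Q$ on the left by an invertible matrix so that, near~$w_0$,
\begin{equation*}
Q = \begin{pmatrix} Q_{11} & 0 \\ 0 & Q_{22} \end{pmatrix}
\end{equation*}
with $Q_{22}$ invertible (hence elliptic, contributing only the trivial estimate), $Q_{11}$ a $K\times K$ block satisfying $Q_{11}(w_0)=0$, $\re Q_{11} = 0$ on $\set{\tau=0}$, and $\partial_\tau \re Q_{11}(w_0)$ positive definite. By Taylor expansion in $\tau$, one writes $\re Q_{11} = \tau A$ with $A \in C^\infty$ Hermitian and $A \ge c\id_K > 0$ near~$w_0$, while $\im Q_{11} = F + \Cal O(\tau)$ with $0 \le F(t,x,\xi) \in C^\infty$ inheriting the measure bound~\eqref{sub4} along bicharacteristics of $\Sigma$, which in the chosen coordinates are lines parallel to $\partial_t$.

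The core ingredient is a one-dimensional semiclassical estimate: for the family of operators $P(x,\xi) = hD_t\cdot A(t,x,\xi) + iF(t,x,\xi)$ acting in $t$ alone,
\begin{equation*}
h^{1/(\mu+1)}\mn{v} \le C\mn{P(x,\xi)v} + Ch\mn{v},\qquad v \in C^\infty_0(\br,\bc^K),
\end{equation*}
uniformly for $(x,\xi)$ in a neighborhood of $(x_0,\xi_0)$. I would prove this by testing $P(x,\xi)v$ against $\Phi(t/\lambda)v$ for $\lambda = h^{1/(\mu+1)}$ and a bounded monotone scalar $\Phi$. Integration by parts yields a balance of the form
\begin{equation*}
2\im\w{P v,\Phi(\cdot/\lambda) v} = \frac{h}{\lambda}\w{A\Phi'(\cdot/\lambda)v,v} + 2\w{F\Phi(\cdot/\lambda)v,v} + \Cal O(h\mn v^2).
\end{equation*}
Splitting the $t$-axis into the set where the smallest eigenvalue of $F(t,x,\xi)$ exceeds $\lambda^\mu$ (where a matrix sharp G\aa rding inequality gives a lower bound) and its complement (of measure $\le C\lambda^\mu$ by~\eqref{sub4}, on which $\Phi'/\lambda$ is bounded below by $c/\lambda^2$, producing the gain $h/\lambda = h^{\mu/(\mu+1)}/\lambda^{\mu-1}$), and balancing, extracts the factor $\lambda^\mu = h^{\mu/(\mu+1)}$ from the left-hand side.

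Finally, to recover the full microlocal estimate~\eqref{locsubest}, I would use the semiclassical Weyl calculus together with a dyadic partition of unity at scale $\lambda$ in the transverse variables $(x,\xi)$ near~$w_0$, transferring the fiberwise estimate patch by patch; symbolic remainders are absorbed in the $h\mn u$ slack, while the cutoff away from $w_0$ becomes the symbol $R$. The main obstacle is the matrix structure of the one-dimensional step: because \eqref{sub4} only controls the smallest eigenvalue of~$F$, the spectral decomposition of $F(t)$ varies with $t$ and does not commute with $A$ or with the multiplier $\Phi$, so the bookkeeping of commutators and of Fefferman-Phong type errors for matrix-valued symbols is delicate, and is precisely the analysis carried out in~\cite{de:pseudospec}.
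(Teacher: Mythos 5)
You should first be aware that the paper contains no proof of this proposition at all: it is imported verbatim, with a citation, as Proposition~6.1 of \cite{de:pseudospec}, and the surrounding text only shows how to deduce Theorem~\ref{subthm} from it. So your sketch has to stand on its own, and as written it has two genuine gaps. The first is in the one-dimensional step. After the block reduction you replace $\im Q_{11}$ by $F(t,x,{\xi})$, but $\im Q_{11} = F + \Cal O({\tau})$ and the $\Cal O({\tau})$ term has no sign; it produces errors of size $\mn{hD_t v}$ that the model $hD_t\,A + iF$ does not see, and absorbing them needs an argument. More seriously, the fixed-profile multiplier ${\Phi}(t/{\lambda})$ cannot give the claimed lower bound: condition~\eqref{sub4} only bounds the measure of ${\Omega}_{\delta}(F_{x,{\xi}})$, which may be an arbitrary subset of $\set{|t|<c}$, whereas ${\Phi}'(t/{\lambda})$ is bounded from below only on an interval of length $\cong {\lambda}$ where the profile actually increases. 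Hence the term $\frac{h}{\lambda}\int {\Phi}'\w{Av,v}\,dt$ gives no control of $v$ on the bad set, and your exponent bookkeeping ("$h/{\lambda}= h^{{\mu}/({\mu}+1)}/{\lambda}^{{\mu}-1}$") does not balance as stated. What does work in one dimension is either a multiplier adapted to the set (e.g.\ averaged translates or a weight built from the measure of ${\Omega}_{\delta}$) or the standard split: $\delta\mn{v}^2$ on the good set from $\im\w{Pv,v}\ge \int\w{Fv,v}-Ch\mn v^2$, plus $\mn{v}^2_{L^2({\Omega}_{\delta})}\le C{\delta}^{\mu}\sup|v|^2$ with $\sup |v|^2 \lesssim h^{-1}\mn{Pv}\mn v$, optimized at ${\delta}= h^{1/({\mu}+1)}$; here the positivity $A\ge c$ and $F\ge 0$ make the scalar argument carry over.

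The second gap is the transfer to the full estimate. A partition of unity "at scale ${\lambda}$ in the transverse variables $(x,{\xi})$" is not admissible: for finite type ${\mu}=1/k\le 1/2$, so ${\lambda}^2 = h^{2/({\mu}+1)} \ll h$, and localizing at scale ${\lambda}$ in both $x$ and ${\xi}$ violates the uncertainty principle, so the "symbolic remainders" are not $\Cal O(h)\mn u$. Even ignoring that, the error from freezing the coefficients on a patch of size ${\lambda}$ is $\Cal O({\lambda})\mn u = \Cal O(h^{1/({\mu}+1)})\mn u$, i.e.\ of exactly the same size as the left-hand side of \eqref{locsubest}, so it cannot be absorbed. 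The coupling through $hD_x$ is precisely where the work lies, and it has to be handled by an operator-valued (non-fiberwise) argument; your closing sentence delegates this to \cite{de:pseudospec}, which mirrors what the paper itself does by citing Proposition~6.1 there, but it means the proposal is a plausible outline of the scalar mechanism rather than a proof of the proposition.
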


Here $C_\rb^\infty$ are $C^\infty$ functions with
$L^\infty$ bounds on any derivative, and the result is uniform in the
usual sense.
Observe that this estimate can be extended to a semiglobal estimate.
In fact, let ${\omega}$ be a neighborhood of $w_0$ such that $\supp R \bigcap 
{\omega} = \emptyset$, where $R$ is given by Proposition~\ref{qestprop}.
Take ${\varphi}\in C^\infty_0({\omega})$ such that $0 \le {\varphi} \le
1$ and  ${\varphi} = 1$ in a neighborhood of ~$w_0$. 
By substituting ${\varphi}(t,x,hD_{t,x})u$
in~\eqref{locsubest} we obtain from the calculus 
\begin{equation}\label{locsubest0}
  h^{1/{\mu}+1}  \mn {{\varphi}(t,x,hD_{t,x})u} \le C_N
 (\mn{{\varphi}(t,x,hD_{t,x}) Q(t,x,hD_{t,x})u} + h\mn u) \qquad \forall\, u \in C^\infty_0
\end{equation}
for small enough ~$h$ since $R{\varphi} \equiv 0$ and $
\mn{[Q(t,x,hD_{t,x}),{\varphi}(t,x,hD_{t,x})]u} \le C h\mn u$. Thus, if $Q$
satisfies conditions~\eqref{sub1}--\eqref{sub4} near any $w \in K
\Subset T^*\br^n$, then by using Bolzano-Weierstrass we obtain the
estimate~\eqref{locsubest} with $\supp R \bigcap K = \emptyset$.

Now, by using that $\wt Q_j$ satisfies~\eqref{sub1}--\eqref{sub4} in a
neighborhood of $\supp {\varphi}_j$, we obtain the
estimate~\eqref{locsubest} for $\wt Q_j(t,x,hD_{t,x})$ with $h = h_j = 2^{-j}
\ll 1$ and
$R = R_j \in S^{0}_{1,0}$
such that $\supp {\varphi}_j \bigcap \supp R_j = \emptyset$.  Substituting
${\varphi}_j(D_{t,x}) u$ we obtain for $j \gg 1$ that
\begin{equation*}
  2^{-j/{\mu}+1}  \mn {{\varphi}_j(D_{t,x}) u} \le C_N
 (\mn{Q_j(t,x,D_{t,x}){\varphi}_j(D_{t,x})u} + \mn{\wt R_ju} +
 2^{-j}\mn{{\varphi}_j(D_{t,x})u}) \quad \forall\, u \in \Cal S'
\end{equation*}
where $\wt R_j = R_j(t,x,D_{t,x}){\varphi}_j(D_{t,x}) \in {\Psi}^{-N}$ with values in
$\ell^2$. Now since $Q_j$ and $Q$ are uniformly  bounded in
$S^0_{1,0}$ the calculus gives that 
$$Q_j(t,x,D_{t,x}){\varphi}_j(D_{t,x}) =
{\varphi}_j(D_{t,x})Q(t,x,D_{t,x}) + {\varrho}_j(t,x,D_{t,x})$$ 
where $\set{{\varrho}_j}_j \in
{\Psi}^{-1}$ with values in ~~$\ell^2$.  
Thus, by squaring and summing up, we
obtain by continuity that
\begin{equation}\label{hest}
 \mn{u}_{(-1/{\mu}+1)}^2 \le C(\mn{Q(t,x,D_{t,x}) u}^2 + \mn{u}_{(-1)}^2)\qquad u
 \in H_{(-1)}
\end{equation}
Since $Q(t,x,D_{t,x})= M(t,x,D_{t,x})P(t,x,D_{t,x})$ modulo ${\Psi}^{-1}$ where $M \in
{\Psi}^0$, the calculus gives
\begin{multline}\label{simpest}
 \mn {Q(t,x,D_{t,x})u}  \le C( \mn{M(t,x,D_{t,x})P(t,x,D_{t,x})u} + \mn u_{(-1)}) \\\le
 C'(\mn{P(t,x,D_{t,x})u} + \mn u_{(-1)}) \quad u  \in H_{(-1)} 
\end{multline}
which together with ~~\eqref{hest} proves Theorem~\ref{subthm}.
\end{proof}

\bibliographystyle{amsplain}

\providecommand{\bysame}{\leavevmode\hbox to3em{\hrulefill}\thinspace}
\providecommand{\MR}{\relax\ifhmode\unskip\space\fi MR }
\providecommand{\MRhref}[2]{%
  \href{http://www.ams.org/mathscinet-getitem?mr=#1}{#2}
}
\providecommand{\href}[2]{#2}

\end{document}